\documentclass{article}
\usepackage[T1]{fontenc}
\usepackage[latin9]{inputenc}
\usepackage[english,british]{babel}
\usepackage{mathrsfs}
\usepackage{mathtools}
\usepackage{amsmath}
\usepackage{amsthm}
\usepackage{amssymb}
\usepackage[authoryear]{natbib}
\usepackage[unicode=true,pdfusetitle,
 bookmarks=true,bookmarksnumbered=false,bookmarksopen=false,
 breaklinks=false,pdfborder={0 0 1},backref=false,colorlinks=false]
 {hyperref}

\makeatletter
\numberwithin{equation}{section}
\theoremstyle{plain}
\newtheorem{thm}{\protect\theoremname}
\theoremstyle{plain}
\newtheorem{assumption}[thm]{\protect\assumptionname}
\theoremstyle{definition}
\newtheorem{defn}[thm]{\protect\definitionname}
\theoremstyle{plain}
\newtheorem{cor}[thm]{\protect\corollaryname}
\theoremstyle{remark}
\newtheorem*{rem*}{\protect\remarkname}
\theoremstyle{plain}
\newtheorem{lem}[thm]{\protect\lemmaname}

\@ifundefined{date}{}{\date{}}
\usepackage[a4paper, includeheadfoot, margin = 2.54cm]{geometry} 
\usepackage{amsmath}
\usepackage[capitalize,nameinlink]{cleveref}
\crefname{equation}{}{}
\usepackage{autonum}

\usepackage{bbm}


\makeatletter
\newcommand{\restore@Environment}[1]{%
  \AtBeginDocument{%
    \csletcs{#1*}{#1}%
    \csletcs{end#1*}{end#1}%
  }%
}
\forcsvlist\restore@Environment{alignat,equation,gather,multline,flalign,align}
\makeatother

\AtBeginDocument{\renewcommand{\ref}[1]{\cref{#1}}}

\allowdisplaybreaks

\usepackage{thmtools}
\makeatletter
\def\cleartheorem#1{%
    \expandafter\let\csname#1\endcsname\relax
    \expandafter\let\csname c@#1\endcsname\relax
}
\makeatother

\cleartheorem{thm}
\cleartheorem{cor}
\cleartheorem{lem}
\cleartheorem{prop}
\cleartheorem{conjecture}
\cleartheorem{defn}
\cleartheorem{example}
\cleartheorem{problem}
\cleartheorem{xca}
\cleartheorem{sol}
\cleartheorem{rem}
\cleartheorem{claim}
\cleartheorem{fact}
\cleartheorem{criterion}
\cleartheorem{lyxalgorithm}
\cleartheorem{ax}
\cleartheorem{condition}
\cleartheorem{note}
\cleartheorem{notation}
\cleartheorem{summary}
\cleartheorem{acknowledgement}
\cleartheorem{conclusion}
\cleartheorem{assumption}
\cleartheorem{question}

\theoremstyle{plain}
\newtheorem{thm}{\protect\theoremname}
\theoremstyle{plain}
\newtheorem{cor}[thm]{\protect\corollaryname}
\theoremstyle{plain}
\newtheorem{lem}[thm]{\protect\lemmaname}
\theoremstyle{plain}

\theoremstyle{plain}

\theoremstyle{definition}
\newtheorem{defn}[thm]{\protect\definitionname}
\theoremstyle{definition}

\theoremstyle{definition}

\theoremstyle{definition}

\theoremstyle{definition}

\theoremstyle{remark}

\theoremstyle{remark}

\theoremstyle{plain}

\theoremstyle{plain}

\theoremstyle{plain}

\theoremstyle{plain}

\theoremstyle{definition}

\theoremstyle{remark}

\theoremstyle{remark}

\theoremstyle{remark}

\theoremstyle{remark}

\theoremstyle{remark}

\theoremstyle{plain}
\newtheorem{assumption}[thm]{\protect\assumptionname}
\theoremstyle{plain}

\crefname{thm}{Theorem}{Theorems}
\crefname{cor}{Corollary}{Corollaries}
\crefname{lem}{Lemma}{Lemmata}
\crefname{prop}{Proposition}{Propositions}
\crefname{conjecture}{Conjecture}{Conjectures}
\crefname{defn}{Definition}{Definitions}
\crefname{example}{Example}{Examples}
\crefname{problem}{Problem}{Problems}
\crefname{xca}{Exercise}{Exercises}
\crefname{sol}{Solution}{Solutions}
\crefname{rem}{Remark}{Remarks}
\crefname{claim}{Claim}{Claims}
\crefname{fact}{Fact}{Facts}
\crefname{criterion}{Criterion}{Criteria}
\crefname{lyxalgorithm}{Algorithm}{Algorithms}
\crefname{ax}{Axiom}{Axioms}
\crefname{condition}{Condition}{Conditions}
\crefname{note}{Note}{Notes}
\crefname{notation}{Notation}{Notations}
\crefname{summary}{Summary}{Summaries}
\crefname{acknowledgement}{Acknowledgement}{Acknowledgements}
\crefname{conclusion}{Conclusion}{Conclusions}
\crefname{assumption}{Assumption}{Assumptions}
\crefname{question}{Question}{Questions}

\makeatother

\usepackage{babel}
\providecommand{\acknowledgementname}{Acknowledgement}
\providecommand{\algorithmname}{Algorithm}
\providecommand{\assumptionname}{Assumption}
\providecommand{\axiomname}{Axiom}

\providecommand{\claimname}{Claim}
\providecommand{\conclusionname}{Conclusion}
\providecommand{\conditionname}{Condition}
\providecommand{\conjecturename}{Conjecture}
\providecommand{\corollaryname}{Corollary}
\providecommand{\criterionname}{Criterion}
\providecommand{\definitionname}{Definition}
\providecommand{\examplename}{Example}
\providecommand{\exercisename}{Exercise}
\providecommand{\factname}{Fact}
\providecommand{\lemmaname}{Lemma}
\providecommand{\notationname}{Notation}
\providecommand{\notename}{Note}
\providecommand{\problemname}{Problem}
\providecommand{\propositionname}{Proposition}
\providecommand{\questionname}{Question}
\providecommand{\remarkname}{Remark}
\providecommand{\solutionname}{Solution}
\providecommand{\summaryname}{Summary}
\providecommand{\theoremname}{Theorem}

\makeatother

\addto\captionsbritish{\renewcommand{\assumptionname}{Assumption}}
\addto\captionsbritish{\renewcommand{\corollaryname}{Corollary}}
\addto\captionsbritish{\renewcommand{\definitionname}{Definition}}
\addto\captionsbritish{\renewcommand{\lemmaname}{Lemma}}
\addto\captionsbritish{\renewcommand{\remarkname}{Remark}}
\addto\captionsbritish{\renewcommand{\theoremname}{Theorem}}
\addto\captionsenglish{\renewcommand{\assumptionname}{Assumption}}
\addto\captionsenglish{\renewcommand{\corollaryname}{Corollary}}
\addto\captionsenglish{\renewcommand{\definitionname}{Definition}}
\addto\captionsenglish{\renewcommand{\lemmaname}{Lemma}}
\addto\captionsenglish{\renewcommand{\remarkname}{Remark}}
\addto\captionsenglish{\renewcommand{\theoremname}{Theorem}}
\providecommand{\assumptionname}{Assumption}
\providecommand{\corollaryname}{Corollary}
\providecommand{\definitionname}{Definition}
\providecommand{\lemmaname}{Lemma}
\providecommand{\remarkname}{Remark}
\providecommand{\theoremname}{Theorem}

\begin{document}
\global\long\def\MT{\clubsuit}%
\global\long\def\MS{\blacklozenge}%
\global\long\def\N{\mathbb{N}}%
\global\long\def\Z{\mathbb{Z}}%
\global\long\def\Q{\mathbb{Q}}%
\global\long\def\R{\mathbb{R}}%
\global\long\def\C{\mathbb{C}}%
\global\long\def\P{\mathbb{P}}%
\global\long\def\E{\mathbb{E}}%
\global\long\def\1{\mathbbm1}%
\global\long\def\as{\ a.s.}%
\global\long\def\Pas{\:\P\text{-}a.s.}%
\global\long\def\F{F}%
\global\long\def\d{\mathrm{d}}%
\global\long\def\e{\mathrm{e}}%
\global\long\def\diff#1#2{\frac{\mathrm{d}#1}{\mathrm{d}#2}}%
\global\long\def\lebesgue{\lambda\mkern-13mu  \lambda}%
\global\long\def\argmin#1{\operatorname*{arg\,\min}_{#1}}%
\global\long\def\supp{\operatorname*{supp}}%
\global\long\def\eps{\varepsilon}%
\global\long\def\theta{\vartheta}%
\global\long\def\Span{\operatorname{span}}%
\global\long\def\leq{\leqslant}%
\global\long\def\geq{\geqslant}%
\global\long\def\le{\leqslant}%
\global\long\def\ge{\geqslant}%
\foreignlanguage{english}{}
\global\long\def\tilde#1{\widetilde{#1}}%
\foreignlanguage{english}{}
\global\long\def\hat#1{\widehat{#1}}%

\title{PAC-Bayes Bounds for High-Dimensional Multi-Index Models with Unknown
Active Dimension}
\author{Maximilian F. Steffen\thanks{Financial support of the DFG through project TR 1349/3-1 is gratefully
acknowledged. Large parts of this research were carried out while
the author was affiliated to Universität Hamburg. The author would
like to thank Mathias Trabs for helpful comments.}}
\date{Karlsruhe Institute of Technology}
\maketitle
\begin{abstract}
\noindent The multi-index model with sparse dimension reduction matrix
is a popular approach to circumvent the curse of dimensionality in
a high-dimensional regression setting. Building on the single-index
analysis by Alquier, P. \& Biau, G. (Journal of Machine Learning Research
14 (2013) 243--280), we develop a PAC-Bayesian estimation method
for a possibly misspecified multi-index model with unknown active
dimension and an orthogonal dimension reduction matrix. Our main result
is a non-asymptotic oracle inequality, which shows that the estimation
method adapts to the active dimension of the model, the sparsity of
the dimension reduction matrix and the regularity of the link function.
Under a Sobolev regularity assumption on the link function the estimator
achieves the minimax rate of convergence (up to a logarithmic factor)
and no additional price is paid for the unknown active dimension.
\end{abstract}
\textbf{Keywords}: multi-index model, PAC-Bayesian, adaptive nonparametric
estimation, sparsity, oracle 

inequality, dimension reduction

\section{Introduction\label{sec:intro}}

A standard task in supervised learning is to estimate or learn, respectively,
the conditional expectation of a label $Y\in\R$ given a large vector
$\mathbf{X}\in\R^{p}$ of explanatory random variables based on i.i.d.
data $(\mathbf{X}_{i},Y_{i})_{i=1,\dots,n}$ which are distributed
as $(\mathbf{X},Y)$. The corresponding nonparametric regression model
reads as 
\begin{equation}
Y=\F(\mathbf{X})+\varepsilon\label{eq:regression}
\end{equation}
with an observation error $\varepsilon$ satisfying $\E[\varepsilon|\mathbf{X}]=0$
a.s. and the unknown regression function $\F\colon\R^{p}\to\R$ given
by $F=\E[Y|\mathbf{X}=\cdot]$. If the dimension $p$ is large, the
estimation problem suffers from the well-known curse of dimensionality.
This is of particular importance in numerous recent applications where
$p$ may  exceed the sample size $n$. In this paper, we provide a
fully data driven complete calibration of the high-dimensional multi-index
model with unknown active dimension.

A popular approach to reduce the effective dimension of the model
is to impose a multi-index structure on $F$ (\citealp{li1991}).
While we do not assume that the observations exactly follow a multi-index
model, our method builds upon an approximation of the regression function
of the form

\begin{equation}
\F(\mathbf{x})\approx f^{*}(\Theta^{*}\mathbf{x}),\qquad\forall\mathbf{x}\in\R^{p},\label{eq:multiindex}
\end{equation}
for some \emph{active dimension} $d^{*}\ll p$, a sparse \emph{dimension
reduction matrix} $\Theta^{*}\in\R^{d^{*}\times p}$ and a (measurable)
\emph{link function} $f^{*}\colon\R^{d^{\ast}}\to\R$. Following the
aforementioned \citet{li1991}, the estimation of the space spanned
by the rows of $\Theta^{\ast}$ has been studied extensively in the
literature, see e.g. \citet{Hristache2001}, \citet{Xia2007} and
\citet{Dalalyan2008}, but under the assumption of a known active
dimension $d^{\ast}$. While some research has been done on the estimation
of $d^{\ast}$ itself, see \citet{Xia2002} and \citet{Zhu2006},
the estimation of the overall model has relied on estimating $\Theta^{\ast}$
and $f^{\ast}$ separately to then analyse the propagation error,
see \citet{Klock2021}. The analysis of high-dimensional multi-index
models, where $p\gg n$, is rather limited.

We use a PAC-Bayesian estimation approach, see \citet{Guedj2019}
and \citet{Alquier2021} for an overview, which was originally developed
by \citet{catoni2004,catoni2007} and has been adapted to the single-index
model (i.e. $d^{\ast}=1$) without misspecification by \citet{alquier2013}.
In this paper we generalise the PAC-Bayes method for single index
models to the more flexible class of multi-index models. In particular,
we aim for a method which adapts to the unknown active dimension $d^{\ast}$,
the sparsity of $\Theta^{\ast}$ and the regularity of $f^{\ast}$
to achieve a good approximation \ref{eq:multiindex} based on the
given data. 

As a standard assumption in the theory of multi-index models, we suppose
that the dimension reduction matrix is (semi-)orthogonal, i.e. $\Theta^{\ast}(\Theta^{\ast})^{\top}=I_{d^{*}\times d^{*}}$
is the identity matrix, see \citet[Proposition 1.1]{Xia2008}. Indeed,
this allows for the interpretation of $\Theta^{\ast}\mathbf{X}$ as
a rotation of the covariates, projected onto the first $d^{\ast}$
coordinates followed by another rotation. 

\medskip{}

The PAC-Bayes approach relies on the following principle: With a prior
$\pi$ for the parameters $(d,\Theta,f)$ we consider the Gibbs-posterior
probability distribution $\hat{\rho}_{\lambda}$ whose $\pi$-density
is (up to normalisation) given by 
\begin{equation}
\diff{\hat{\rho}_{\lambda}}{\pi}(d,\Theta,f)\propto\exp\big(-\lambda R_{n}(d,\Theta,f)\big)\label{eq:posterior}
\end{equation}
with a tuning parameter $\lambda>0$ and empirical prediction risk
\[
R_{n}(d^{*},\Theta^{*},f^{*})=\frac{1}{n}\sum_{i=1}^{n}\big(Y_{i}-f^{*}(\Theta^{*}\mathbf{X}_{i})\big)^{2}.
\]
The estimator for $(d^{\ast},\Theta^{\ast},f^{\ast})$ is obtained
by simulating a random variable 
\begin{equation}
(\hat d_{\lambda},\hat{\Theta}_{\lambda},\hat f_{\lambda})\sim\hat{\rho}_{\lambda}.\label{eq:Estimator}
\end{equation}
While \ref{eq:posterior} coincides with the classical Bayesian posterior
distribution only if $Y_{i}=f(\Theta\mathbf{X}_{i})+\eps_{i}$ with
i.i.d. $\eps_{i}\sim\mathrm{N}(0,n/(2\lambda))$, the estimator $\hat F_{\hat d}:=\hat f_{\lambda}(\hat{\Theta}_{\lambda}\cdot)$
will achieve a small prediction error under quite mild model assumptions. 

We will choose a sieve prior that prefers models with a low active
dimension, sparse dimension reduction matrices and regular link functions.
Let $\pi$ be supported on $\bigcup_{d=1}^{p}\{d\}\times\mathcal{S}_{d}\times\mathcal{F}_{d}$
for some classes $\mathcal{S}_{d}$ and $\mathcal{F}_{d}$ for $\Theta$
and $f$, respectively. For $\mathcal{S}_{d}$ we will study a class
of sparse matrices while $\mathcal{F}_{d}$ will be given by finite
dimensional wavelet approximations. The prior is uniform for a given
sparsity and a wavelet projection level. The posterior weighs each
triplet of parameters $(d,\Theta,f)$ based on its empirical performance
(with respect to the empirical loss function) on the data, where the
tuning parameter $\lambda$ determines the impact of $R_{n}(d,\Theta,f)$
in comparison to the prior beliefs. 

\medskip{}

We will quantify the accuracy of the estimation procedure in terms
of the excess risk 
\begin{equation}
\mathcal{E}(d^{*},\Theta^{*},f^{*})\coloneqq R(d^{*},\Theta^{*},f^{*})-\min_{d,\Theta,f}R(d,\Theta,f)=\E[(f^{*}(\Theta^{*}\mathbf{X})-\F(\mathbf{X}))^{2}],\label{eq:excess}
\end{equation}
where
\begin{equation}
R(d^{*},\Theta^{*},f^{*})\coloneqq\E\big[(Y-f^{*}(\Theta^{*}\mathbf{X}))^{2}\big]\label{eq:lossfunction}
\end{equation}
is the prediction risk and the minimum is attained at $\Theta^{*}=I_{p\times p}$
and $f^{*}=F$. We will prove an oracle inequality verifying that
the PAC-Bayes estimator is not worse than the optimal choices for
$\Theta\in\mathcal{S}_{d}$ and $f\in\mathcal{F}_{d}$ for any $d$.
In particular, the overall quality of the method depends on the approximation
properties of the spaces $\ensuremath{\mathcal{S}_{d}}$ and $\mathcal{F}_{d}$. 

\medskip{}

The paper is organised as follows: In \ref{sec:estimation}, we explain
our estimation method. In \ref{sec:Oracle-inequalities}, the main
results are stated. The proofs have been postponed to \ref{sec:Proofs}.

\section{Construction of the prior\label{sec:estimation}}

To construct the prior, we will introduce for any dimension $d=1,\dots,p$
classes $\mathcal{S}_{d}$ and $\mathcal{F}_{d}$ together with priors
$\mu_{d}$ and $\nu_{d}$ for the dimension reduction matrix $\Theta$
and the link function $f$, respectively. Based on that we can then
define the prior $\pi$ on $\bigcup_{d=1}^{p}\{d\}\times\mathcal{S}_{d}\times\mathcal{F}_{d}$. 

We start for a fixed active dimension $d\in\{1,\dots,p\}$. While
we have mentioned above that an optimal dimension reduction matrix
$\Theta^{*}$ should be orthogonal, we will not impose this restriction
for the estimation method. Instead, we only require that the candidate
matrices have $\Vert\cdot\Vert_{2}$-normed rows, i.e. for $\Theta=(\vartheta_{1},\dots,\vartheta_{d})^{\top}\in\R^{d\times p}$
with row vectors $\vartheta_{i}=(\vartheta_{i,1},\dots,\vartheta_{i,p})\in\R^{p}$
we impose $\Vert\vartheta_{i}\Vert_{2}\coloneqq\big(\sum_{j=1}^{p}\vartheta_{i,j}^{2}\big)^{1/2}=1$.
To encode sparsity, let 
\[
\mathcal{I}_{d}\coloneqq\big\{ I\,\big|\,\emptyset\neq I:=I_{1}\times\dots\times I_{d},\,I_{1},\dots,I_{d}\subseteq\{1,\dots,p\}\big\}
\]
contain all potential sets of \emph{active coordinates,} that is $I_{i}$
describes the active coordinates in the $i$-th argument of the link
function. For $I=I_{1}\times\dots\times I_{d}\in\mathcal{I}_{d}$
the number of active coordinates is $\Vert I\Vert\coloneqq\sum_{i=1}^{d}\vert I_{i}\vert$,
where $\vert I_{i}\vert$ denotes the cardinality of $I_{i}$. Note
that $\emptyset\neq I=I_{1}\times\dots\times I_{d}$ already implies
$I_{1},\dots,I_{d}\neq\emptyset$. The parameter set $\mathcal{S}_{d}(I)$
of sparse dimension reduction matrices is given by 
\begin{align*}
\mathcal{S}_{d}(I) & \coloneqq\big\{\Theta=(\vartheta_{1},\dots,\vartheta_{d})^{\top}\in\R^{d\times p}\mid\vartheta_{i}\in\mathcal{S}(I_{i}),\,i=1,\dots,d\big\},\qquad\text{where}\\
\mathcal{S}(I_{i}) & \coloneqq\big\{\vartheta_{i}=(\vartheta_{i,1},\dots,\vartheta_{i,p})\in\R^{p}\mid\Vert\vartheta_{i}\Vert_{2}=1,\forall j\notin I_{i}:\vartheta_{i,j}=0\big\}.
\end{align*}
Finally, we define $\mathcal{S}_{d}=\bigcup_{I\in\mathcal{I}_{d}}\mathcal{S}_{d}(I)$.

Note that $\mathcal{S}_{d}(I)\supseteq\tilde S_{d}(I)$ for 
\[
\tilde{\mathcal{S}}_{d}(I)\coloneqq\{\Theta=(\vartheta_{1},\dots,\vartheta_{d})^{\top}\in\R^{d\times p}\mid\forall i=1,\dots,d:\Vert\vartheta_{i}\Vert_{2}=1,\vartheta_{i,j}\neq0\text{ iff \ensuremath{\vartheta_{i,j}}\ensuremath{\ensuremath{\in I_{i}}}},\,j=1,\dots,p\}.
\]
In $\tilde S_{d}(I)$ the index set $I$ exactly describes the sparsity
of $\Theta$. However, we consider the prior on the compact set $\mathcal{S}_{d}(I)$
to ensure the existence of solutions to minimisation problems over
$\mathcal{S}_{d}(I)$ and thus the existence of an oracle dimension
reduction matrix as a benchmark for $\hat{\Theta}$.

To construct a prior measure $\mu_{d}$ on $\mathcal{S}_{d}$, we
use the uniform distribution on the set of dimension reduction matrices
with a given active dimension $d$ and with sparsity $i=\|I\|$. These
uniform distributions are then weighted geometrically such that sparse
dimension reduction matrices are preferred by the prior. Denoting
the uniform distribution on $\mathcal{S}_{d}(I)$ by $\mu_{d,I}$,
the prior measure on $\mathcal{S}_{d}$ is thus given by the mixture
\[
\mu_{d}\coloneqq C_{\mu,d}\sum_{i=d}^{dp}10^{-i+d-1}\frac{1}{\vert\mathcal{I}_{d,i}\vert}\sum_{I\in\mathcal{I}_{d,i}}\mu_{d,I}\qquad\text{where}\qquad\mathcal{I}_{d,i}\coloneqq\{I\in\mathcal{I}_{d}\mid\Vert I\Vert=i\}
\]
and with normalisation constant $C_{\mu,d}:=9\big(1-10^{(1-p)d-1}\big)^{-1}$.
Here and in the following two analogous constructions the geometric
decay $10^{-i+d+1}$ can be replaced by $a^{-i+d+1}$ for an arbitrary
fixed $a>1$, but we choose $10$ for convenience. 

\medskip{}

To define a class $\mathcal{F}_{d}$ and a prior $\nu_{d}$ for the
link function, we will use a multivariate tensor product wavelet basis
on $\R^{d}$, see e.g. \citet{Daubechies1992,gine2016}. Let $\phi$
and $\psi$ be a continuously differentiable scaling and wavelet function
on $\R$, respectively, and write $\psi_{0}\coloneqq\phi$, $\psi_{1}\coloneqq\psi$.
We will use compactly supported regular Daubechies wavelets. For $M,N\in\N$
we define the index set
\begin{align*}
\mathcal{Z}_{M,N}^{d}\coloneqq & \{l=(0,l_{2},0)\mid l_{2}\in\Z^{d},\,\Vert l_{2}\Vert_{\infty}\leq N\}\\
 & \cup\big\{ l=(l_{1},l_{2},l_{3})\in\N_{0}\times\Z^{d}\times\{0,1\}^{d}\mid l_{1}\leq M,\Vert l_{2}\Vert_{\infty}\leq2^{l_{1}}N,l_{3}\neq0\big\},
\end{align*}
where $l_{1}$ is the approximation level, $l_{2}$ is a shift parameter
and $l_{3}$ is due to the tensor structure. The system $(\Psi_{l})_{l\in\mathcal{Z}_{\infty,\infty}^{d}}$
with
\[
\Psi_{l}(x)\coloneqq2^{l_{1}d/2}\prod_{i=1}^{d}\psi_{l_{3,i}}(2^{l_{1}}x_{i}-l_{2,i}),\qquad x\in\R^{d},l=(l_{1},l_{2},l_{3})\in\mathcal{Z}_{\infty,\infty}^{d},
\]
is an orthonormal basis of $L^{2}(\R^{d})$. In particular, each $f\in L^{2}(\R^{d})$
admits a wavelet series representation $f=\sum_{l\in\mathcal{Z}_{\infty,\infty}^{d}}\langle f,\Psi_{l}\rangle\Psi_{l}$.
Throughout, we fix a sufficiently large constant $N\in\N_{0}$ and
abbreviate $\mathcal{Z}_{M}^{d}\coloneqq\mathcal{Z}_{M,N}^{d}$. For
$\xi>0$ we define the compact wavelet coefficient ball

\begin{gather*}
\mathcal{B}_{d,M}(\xi)\coloneqq\big\{\beta\in\R^{\mathcal{Z}_{M}^{d}}\big|\Vert\beta\Vert_{\mathcal{B}}\leq\xi\big\},\qquad\text{where}\\
\Vert\beta\Vert_{\mathcal{B}}\coloneqq L^{d}\sum_{l\in\mathcal{Z}_{M}^{d}}2^{l_{1}(d/2+1)}\vert\beta_{l}\vert,\qquad L\coloneqq\Vert\psi\Vert_{\infty}\lor\Vert\phi\Vert_{\infty}\lor\Vert\psi'\Vert_{\infty}\lor\Vert\phi'\Vert_{\infty},
\end{gather*}
which determines the finite dimensional approximation space

\[
\mathcal{F}_{d,M}(\xi)\coloneqq\big\{ f=\Phi_{d,M}(\beta)\mid\beta\in\mathcal{B}_{d,M}(\xi)\big\}\qquad\text{via}\qquad\Phi_{d,M}(\beta):=\sum_{l\in\mathcal{Z}_{M}^{d}}\beta_{l}\Psi_{l},\beta\in\R^{\mathcal{Z}_{M}^{d}}.
\]
For any $f=\Phi_{d,M}(\beta)$ we write $\Vert f\|_{\mathcal{B}}\coloneqq\Vert\beta\Vert_{\mathcal{B}}$
which corresponds to the Besov norm with regularity $1+d$ and integrability
parameter $1$ on $\Span\{\Psi_{l}:l\in\mathcal{Z}_{M}^{d}\}.$ In
particular, we have for any $f\in\mathcal{F}_{d,M}(\xi)$
\begin{equation}
\Vert f\Vert_{\infty}\leq\Vert f\Vert_{\mathcal{B}}\leq\xi\qquad\text{and}\qquad\Vert(\nabla f)_{i}\Vert_{\infty}\le\|f\|_{\mathcal{B}}\leq\xi,\qquad\forall i\in\{1,\dots,d\}.\label{eq:fandgradient}
\end{equation}
For $C>0$ we set $\mathcal{F}_{d}:=\bigcup_{M=0}^{n}\mathcal{F}_{d,M}(C+1)$.

The prior $\nu_{d}$ on $\mathcal{F}_{d}$ is defined as a random
coefficient prior with uniformly distributed coefficients on $\mathcal{F}_{d,M}(C+1)$
and geometrically decreasing weights in the approximation level $M$.
To this end, let $\tilde{\nu}_{d,M}$ be the uniform distribution
on $\mathcal{B}_{d,M}(C+1)$ and let $\nu_{d,M}:=\tilde{\nu}_{d,M}(\Phi_{d,M}^{-1}(\cdot))$
denote the push-forward measure of $\tilde{\nu}_{d,M}$ under $\Phi_{d,M}$.
Then
\[
\nu_{d}\coloneqq C_{\nu,d}\sum_{M=0}^{n}10^{-M}\nu_{d,M},\qquad C_{\nu,d}:=\frac{9}{10-10^{-n}}.
\]

\medskip{}

We can now define the prior for a fixed active dimension $d$ as the
product measure $\pi_{d}\coloneqq\delta_{d}\otimes\mu_{d}\otimes\nu_{d}$
with the Dirac measure $\delta_{d}$ in $d$. Finally, we mix over
all possible active dimensions to account for the fact that $d^{\ast}$
is unknown. Encoding a preference for simple models, i.e. small active
dimensions, via weights $10^{-d}$, the final prior on $\bigcup_{d=1}^{p}\{d\}\times\mathcal{S}_{d}\times\mathcal{F}_{d}$
is given by 
\[
\pi=C_{\pi}\sum_{d=1}^{p}10^{-d}\pi_{d},\qquad C_{\pi}:=\frac{9}{1-10^{-p}}.
\]
The product measure with the Dirac measure in $d$ ensures that a
simulation of $\pi$ will yield a link function and a dimension reduction
matrix with matching active dimension.

\section{Oracle inequalities\label{sec:Oracle-inequalities}}

For an active dimension $d\in\{1,\dots,p\}$, an active index set
$I\in\mathcal{I}_{d}$ of the dimension reduction matrix and an approximation
level $M\in\{0,\dots,n\}$ of the link function, we define the \emph{oracle
choice} on $\mathcal{S}_{d}(I)\times\mathcal{F}_{d,M}(C)$ as
\begin{equation}
(\Theta_{d,I}^{\ast},f_{d,M}^{\ast})\coloneqq\argmin{(\Theta,f)\in\mathcal{S}_{d}(I)\times\mathcal{F}_{d,M}(C)}R(d,\Theta,f)\label{eq:oracle}
\end{equation}
which is not accessible to the practitioner since $R(d,\Theta,f)$
depends on the unknown distribution of $(\mathbf{X},Y)$. Note that
the minimisation in $f$ is over $\mathcal{F}_{d,M}(C)$, whereas
the prior is defined on $\mathcal{F}_{d,M}(C+1)$ which ensures that
a small neighbourhood of $f_{d,M}^{\ast}$ is contained in the support
of the prior. A solution to the minimisation problem in \ref{eq:oracle}
always exists since we have equivalently 
\[
(\Theta_{d,I}^{\ast},\beta_{d,M}^{\ast})=\argmin{(\Theta,\beta)\in\mathcal{S}_{d}(I)\times\mathcal{B}_{d,M}(C)}\E\big[\big(Y-\Phi_{d,M}(\beta)(\Theta\mathbf{X})\big)^{2}\big]
\]
with compact $\mathcal{S}_{d}(I)\times\mathcal{B}_{d,M}(C)$ and continuous
$(\Theta,\beta)\mapsto\E\big[\big(Y-\Phi_{d,M}(\beta)(\Theta\mathbf{X})\big)^{2}\big]$.
If there is more than one solution, we choose one of them. Our main
result gives a theoretical guarantee that the PAC-Bayes estimator
$(\hat d_{\lambda},\hat{\Theta}_{\lambda},\hat f_{\lambda})$ from
\ref{eq:Estimator} is at least as good as the best oracle $(\Theta_{d,I}^{\ast},f_{d,M}^{\ast})_{d,I,M}$
in terms of the excess risk. To this end, we need some mild assumptions
on the regression model \ref{eq:regression}.
\begin{assumption}
\label{assu:bounded}~
\begin{enumerate}
\item For $K,C\ge1$ we have $\Vert\mathbf{X}\Vert_{\infty}\leq K$ a.s.
and $\Vert\F\Vert_{\infty}\leq C$.
\item $\eps$ is conditionally on $\mathbf{X}$ sub-Gaussian, i.e. there
are constants $\sigma,\Gamma>0$ such that 
\[
\E[\vert\varepsilon\vert^{k}|\mathbf{X}]\leq\frac{k!}{2}\sigma^{2}\Gamma^{k-2}\as,\qquad\forall k\geq2.
\]
\end{enumerate}
\end{assumption}

We obtain the following non-asymptotic oracle inequality. It generalises
\citet[Theorem 2]{alquier2013} not only with respect to the multi-index
approach with unknown active dimension, but also with respect to some
technical but practically relevant aspects like the $\ell^{2}$-normalisation
of $\Theta$ and the wavelet basis.
\begin{thm}
\label{thm:oracle}Under \ref{assu:bounded} with $Q=8(2C+1)(\Gamma\lor(2C+1))$
set 

\[
\lambda=\frac{n}{Q+2((2C+1)^{2}+4\sigma^{2})}.
\]
If $n\geq(10C^{-1})\lor(30\sqrt{2}\e p^{-2})$, then we have for any
$\delta\in(0,1)$

\[
\mathcal{E}(\hat d_{\lambda},\hat{\Theta}_{\lambda},\hat f_{\lambda})\leq\inf_{d,I,M}\bigg(3\mathcal{E}(d,\Theta_{d,I}^{\ast},f_{d,M}^{\ast})+\frac{\Xi}{n}\Big(\Vert I\Vert\log(pn)+4^{d}N^{d}2^{dM}\log(Cn)+\log\big(\frac{2}{\delta}\text{\ensuremath{\big)}}\Big)\bigg)
\]
with $(\P\otimes\hat{\rho}_{\lambda})$-probability of at least $1-\delta$,
where $\Xi$ is a constant only depending on $C,K,\Gamma$ and $\sigma$
and the infimum is taken over all triplets $(d,I,M)$ with $d\in\{1,\dots,p\}$,
$I\in\mathcal{I}_{d}$ and $M\in\{0,\dots,n\}$.
\end{thm}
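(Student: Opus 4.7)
My plan is to run the standard Catoni-style PAC-Bayes machinery on the squared loss, substitute the Gibbs posterior $\hat{\rho}_{\lambda}$, and compare against a posterior $\rho^{*}$ concentrated in a small neighbourhood of the oracle $(d,\Theta_{d,I}^{*},f_{d,M}^{*})$. The starting point is a Bernstein-type moment control of the centred loss increment $T(d,\Theta,f)(\mathbf{X},Y)\coloneqq(Y-f(\Theta\mathbf{X}))^{2}-(Y-\F(\mathbf{X}))^{2}$. Factorising $T$ as a bounded quantity times $f(\Theta\mathbf{X})+\F(\mathbf{X})-2Y$, the boundedness hypotheses on $\F$, $f$ and $\mathbf{X}$ together with the sub-Gaussian moments of $\varepsilon$ from \ref{assu:bounded} produce the moment bound $\E[|T|^{k}]\leq\frac{k!}{2}Q^{k-2}\E[T^{2}]$ together with the margin-type identity $\E[T^{2}]\leq Q\mathcal{E}(d,\Theta,f)$, where $Q$ is precisely the constant from the theorem. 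A standard exponential Markov step then yields the two-sided Catoni inequality: for any $\lambda$ with $\lambda Q/n<1$ and any $\rho\ll\pi$, with $\P$-probability at least $1-\delta/2$ simultaneously in $\rho$,
\[
\int\mathcal{E}\,d\rho \leq \frac{1}{1-\lambda Q/n}\Bigl(\int(R_{n}-R_{n}(\F))\,d\rho+\frac{\mathrm{KL}(\rho\|\pi)+\log(2/\delta)}{\lambda}\Bigr),
\]
together with a matching reverse inequality picking up an extra $\lambda((2C+1)^{2}+4\sigma^{2})/n$-term.

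Substituting $\rho=\hat{\rho}_{\lambda}$ in the first inequality and invoking the variational identity $\lambda\int R_{n}\,d\hat{\rho}_{\lambda}+\mathrm{KL}(\hat{\rho}_{\lambda}\|\pi)\leq\lambda\int R_{n}\,d\rho^{*}+\mathrm{KL}(\rho^{*}\|\pi)$ for any competitor $\rho^{*}$, then combining with the reverse inequality applied to $\rho^{*}$, one obtains a clean oracle bound on $\int\mathcal{E}\,d\hat{\rho}_{\lambda}$. The multiplicative factor $3$ in front of $\int\mathcal{E}\,d\rho^{*}$ is engineered by the specific choice $\lambda=n/(Q+2((2C+1)^{2}+4\sigma^{2}))$, which balances the two $(1-\lambda Q/n)^{-1}$ multipliers against the $\lambda/n$-contribution from the reverse direction; a Markov/Fubini step lifts the resulting $\P$-probability bound on $\int\mathcal{E}\,d\hat{\rho}_{\lambda}$ to the $(\P\otimes\hat{\rho}_{\lambda})$-probability bound on $\mathcal{E}(\hat{d}_{\lambda},\hat{\Theta}_{\lambda},\hat{f}_{\lambda})$. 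For the competitor I would take $\rho^{*}=\delta_{d}\otimes\rho^{*}_{\Theta}\otimes\rho^{*}_{f}$, where $\rho^{*}_{\Theta}$ is the restriction of $\mu_{d,I}$ to a small $\eta$-neighbourhood of $\Theta_{d,I}^{*}$ inside the product of spheres $\prod_{i}\mathcal{S}(I_{i})$, and $\rho^{*}_{f}$ is the push-forward under $\Phi_{d,M}$ of the uniform measure on a wavelet $\eta'$-ball around $\beta_{d,M}^{*}$ inside $\mathcal{B}_{d,M}(C+1)$; the margin $(C+1)-C$ in the definition of $\mathcal{F}_{d}$ guarantees this ball lies inside the prior support. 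The gradient bound in \ref{eq:fandgradient} together with $\|\mathbf{X}\|_{\infty}\leq K$ yields Lipschitz continuity of $(\Theta,f)\mapsto R(d,\Theta,f)$, so $\int\mathcal{E}\,d\rho^{*}\leq\mathcal{E}(d,\Theta_{d,I}^{*},f_{d,M}^{*})+O(\eta+\eta')$. The KL divergence splits into the Dirac term $d\log 10+O(1)$, the sparsity term $\|I\|\log 10+\log|\mathcal{I}_{d,\|I\|}|+\|I\|\log(1/\eta)$ (using $|\mathcal{I}_{d,i}|\leq\binom{dp}{i}$ and a standard volume comparison on the product of spheres), and the wavelet term $M\log 10+|\mathcal{Z}_{M}^{d}|\log((C+1)/\eta')$ with $|\mathcal{Z}_{M}^{d}|\lesssim 4^{d}N^{d}2^{dM}$. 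Balancing $\eta\asymp(pn)^{-1}$ and $\eta'\asymp(Cn)^{-1}$ absorbs the Lipschitz remainder into $\Xi/n$ and produces exactly the claimed complexity $\|I\|\log(pn)+4^{d}N^{d}2^{dM}\log(Cn)$.

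The main obstacle I expect is the KL bookkeeping on the product of unit spheres $\prod_{i=1}^{d}\mathcal{S}(I_{i})$: the local volume of a geodesic $\eta$-ball around $\Theta_{d,I}^{*}$ must be compared to the total surface area with a leading constant independent of $d$ and $|I_{i}|$, so that the sparsity contribution stays at the advertised scale $\|I\|\log(pn)$ and does not acquire hidden $\log d$ or $\log|I_{i}|$ factors. A closely related subtlety is verifying that the Lipschitz constant of $(\Theta,f)\mapsto R(d,\Theta,f)$, which passes through the weight $L$ in $\|\cdot\|_{\mathcal{B}}$, is controlled uniformly in $(d,p,M,I,n)$; otherwise the $O(\eta+\eta')$-term cannot be pushed below the $1/n$ scale. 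The sample-size condition $n\geq(10/C)\vee(30\sqrt{2}\e p^{-2})$ plays its role at this step by absorbing the prior normalisation constants $C_{\pi},C_{\mu,d},C_{\nu,d}$ into the single constant $\Xi=\Xi(C,K,\Gamma,\sigma)$. By comparison, the Bernstein moment expansion of the first step is tedious but conceptually routine.
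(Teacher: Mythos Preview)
Your proposal is correct and follows the paper's argument closely: the paper packages your Bernstein/two-sided PAC-Bayes step as black-box lemmata (stated as extensions of Alquier--Biau), uses exactly your localised competitor $\rho=\delta_{d}\otimes\rho^{1}_{d,I,\eta}\otimes\rho^{2}_{d,M,\gamma}$, carries out your Lipschitz remainder bound via an explicit five-term expansion $\int\mathcal{E}\,d\rho=\mathcal{E}(d,\Theta_{d,I}^{*},f_{d,M}^{*})+U_{1}+\dots+U_{5}$, and resolves your ``main obstacle'' (the sphere-cap KL) by a covering-number argument on $\mathbb{S}^{|I_{i}|-1}$ giving $\mathcal{K}(\rho^{1}_{d,I,\eta},\mu_{d,I})\leq\|I\|\log(3\sqrt{2}/\eta)$. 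One small correction to your last paragraph: the Lipschitz constant of $(\Theta,f)\mapsto R(d,\Theta,f)$ is \emph{not} uniform in $(d,p)$ but scales like $dpKC$, which is harmless because $\eta$ enters the KL only logarithmically---the paper simply takes $\eta=(dpn)^{-1}$, and the sample-size condition $n\geq(10/C)\vee(30\sqrt{2}\e p^{-2})$ is used only to fold residual constants such as $\log(3\sqrt{2}dpn)$ and $(M{+}1)\log 10$ into the announced $\log(pn)$ and $\log(Cn)$ factors, not to absorb the prior normalisers $C_{\pi},C_{\mu,d},C_{\nu,d}$.
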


Some remarks are in order: \textbf{1) }The $1-\delta$ probability
with respect to the coupling $\P\otimes\hat{\rho}_{\lambda}$ takes
into account the randomness of the data and of the estimate.  \textbf{2)
}The right-hand side of the oracle inequality can be interpreted similiar
to the classical bias-variance decomposition in non-parametric statistics.
The first term $\mathcal{E}(d,\Theta_{d,I}^{\ast},f_{d,M}^{\ast})=\E[(f_{d,M}^{*}(\Theta_{d,I}^{*}\mathbf{X})-\F(\mathbf{X}))^{2}]$
quantifies the approximation error while second term is an upper bound
for the stochastic error. In particular, we recover $\frac{\|I\|}{n}\log(pn)$
(or $\frac{\|I\|}{n}\log(p)$ if $p\ge n$) as the typical error term
for estimating sparse matrices with sparsity $\|I\|$ while $4^{d}N^{d}2^{dM}\frac{\log(n)}{n}$
is due to the estimation of $4^{d}N^{d}2^{dM}$ many wavelet coefficients
each with (squared) accuracy $\frac{\log(n)}{n}$ paying a logarithmic
price for adaptiveness.\textbf{ 3)} The infimum over all $(d,I,M)$
in the upper bound shows that the estimator adapts to the active dimension,
the sparsity of the dimension reduction matrix and the regularity
of the link function.\textbf{ 4) }The factor $3$ in the upper bound
can be improved to $(1+\tau)$ for any $\tau>0$ at the cost of a
larger constant $\Xi$.  \textbf{5) }One can show the same result
in a multi-index model with a known active dimension $d^{\ast}$ by
using $\pi_{d^{\ast}}$ as a prior instead of $\pi$. The only difference
(up to a constant) in the result is that the infimum in the upper
bound is only taken over all pairs $(I,M)\in\mathcal{I}_{d^{\ast}}\times\{0,\dots,n\}$.
Consequently, no additional price is paid for not knowing the true
active dimension of the model.

\medskip{}

In the well-specified setting and under assumptions on the distribution
of $\Theta^{\ast}\mathbf{X}$ as well as a Besov-type regularity assumption
on the link function, we can make the upper bound from \ref{thm:oracle}
more explicit.
\begin{assumption}
\label{assu:nonmisspecified}~
\begin{enumerate}
\item There exist $d^{\ast}\in\{1,\dots,p\}$, $\Theta^{\ast}\in\mathcal{S}^{d^{\ast}}$
and $f^{\ast}\colon\R^{d^{\ast}}\to\R$ such that $\F=f^{\ast}(\Theta^{\ast}\cdot)$.
\item The random variable $\Theta^{\ast}\mathbf{X}$ is bounded, i.e. $\Vert\Theta^{\ast}\mathbf{X}\Vert_{\infty}\leq B_{1}$
for some $B_{1}>0$, and $\Theta^{\ast}\mathbf{X}$ has a $\lebesgue^{d^{\ast}}$-density
on $\R^{d^{\ast}}$ bounded by a constant $B_{2}>0$. 
\end{enumerate}
\end{assumption}

For the true dimension reduction matrix $\Theta^{*}$ we write $\Vert\Theta^{\ast}\Vert_{0}\coloneqq\Vert I^{\ast}\Vert$
for the minimal $I^{\ast}\in\mathcal{I}_{d^{\ast}}$ with respect
to $\Vert\cdot\Vert$ such that $\Theta^{\ast}\in\mathcal{S}_{d^{\ast}}(I^{\ast})$.
The regularity of $f^{*}$ will be measured in terms of its Besov
norm. We recover Sobolev balls for $q=2$, cf. \citet[(4.164)]{gine2016}.
\begin{defn}
The Besov ellipsoid in $\R^{d^{*}}$ with regularity $\alpha>0$ and
integrability parameter $q\in[0,\infty)$ is given by 
\begin{equation}
B_{q,d^{\ast}}^{\alpha}(\xi)\coloneqq\Big\{ f\in L^{2}(\R^{d^{\ast}})\,\Big|\,\sum_{l\in\mathcal{Z}_{\infty,\infty}^{d^{*}}}2^{ql_{1}\alpha}\vert\langle f,\Psi_{l}\rangle\vert^{q}\leq\xi^{q}\Big\}\label{eq:besov}
\end{equation}
for a radius $\xi>0$. 
\end{defn}

\begin{cor}
\label{cor:oracleineq}Let the assumptions of \ref{thm:oracle} be
fulfilled in addition to \ref{assu:nonmisspecified}. Suppose that
$f^{\ast}\in B_{d^{\ast}}^{\alpha}(\xi)$ with $\xi=C(L^{d^{\ast}}2N^{d^{\ast}/2}8^{d^{\ast}/2})^{-1}$
for some $\alpha>2+d^{\ast}$ and $n\geq2$. Then, with $(\P\otimes\hat{\rho}_{\lambda})$-probability
of at least $1-\delta$, we have

\[
\mathcal{E}(\hat d_{\lambda},\hat{\Theta}_{\lambda},\hat f_{\lambda})\leq\Lambda\bigg(\Big(\frac{\log(Cn)}{n}\Big)^{\frac{2\alpha}{2\alpha+d^{\ast}}}+\frac{\Vert\Theta^{\ast}\Vert_{0}\log(pn)}{n}+\frac{\log(\frac{2}{\delta})}{n}\bigg),
\]
where $\Lambda$ only depending on $C,K,\Gamma,\sigma,N,B_{1},B_{2}$
and $d^{\ast}$. 
\end{cor}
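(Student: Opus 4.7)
The plan is to apply \ref{thm:oracle} to the specialised triplet $(d,I,M)=(d^{\ast},I^{\ast},M)$, where $I^{\ast}\in\mathcal{I}_{d^{\ast}}$ is the minimal index set with $\Theta^{\ast}\in\mathcal{S}_{d^{\ast}}(I^{\ast})$ so that $\|I^{\ast}\|=\|\Theta^{\ast}\|_{0}$, and then to optimise over the wavelet resolution level $M$. With this choice, the stochastic part of the bound in \ref{thm:oracle} already matches the form of the target; it therefore remains to estimate the approximation error $\mathcal{E}(d^{\ast},\Theta^{\ast}_{d^{\ast},I^{\ast}},f^{\ast}_{d^{\ast},M})$ and to balance it against the $M$-dependent stochastic term $4^{d^{\ast}}N^{d^{\ast}}2^{d^{\ast}M}\log(Cn)/n$.

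For the approximation error I would exploit the well-specified structure. Since $F=f^{\ast}(\Theta^{\ast}\cdot)$ and $\Theta^{\ast}\in\mathcal{S}_{d^{\ast}}(I^{\ast})$, the definition of the oracle gives, for any $g\in\mathcal{F}_{d^{\ast},M}(C)$,
\[
\mathcal{E}(d^{\ast},\Theta^{\ast}_{d^{\ast},I^{\ast}},f^{\ast}_{d^{\ast},M})\le\E\big[(g(\Theta^{\ast}\mathbf{X})-f^{\ast}(\Theta^{\ast}\mathbf{X}))^{2}\big]\le B_{2}\,\|g-f^{\ast}\|_{L^{2}(\R^{d^{\ast}})}^{2},
\]
where the last step uses the bounded density of $\Theta^{\ast}\mathbf{X}$ from \ref{assu:nonmisspecified}. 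Taking $g=\Phi_{d^{\ast},M}(\beta^{\ast})$ with $\beta^{\ast}_{l}=\langle f^{\ast},\Psi_{l}\rangle$, orthonormality of the wavelet basis together with the Besov assumption yields
\[
\|g-f^{\ast}\|_{L^{2}}^{2}=\sum_{l_{1}>M}\sum_{l_{2},l_{3}}|\langle f^{\ast},\Psi_{l}\rangle|^{2}\le 2^{-2M\alpha}\xi^{2}.
\]

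The main technical step is to verify that this $g$ actually lies in $\mathcal{F}_{d^{\ast},M}(C)$, i.e.\ that $\|g\|_{\mathcal{B}}\le C$. Splitting $\|g\|_{\mathcal{B}}$ by level $l_{1}$, applying Cauchy--Schwarz in the $(l_{2},l_{3})$-variables, counting the at most $8^{d^{\ast}}N^{d^{\ast}}2^{l_{1}d^{\ast}}$ admissible indices per level, and invoking the level-wise bound $\sum_{l_{2},l_{3}}|\langle f^{\ast},\Psi_{l}\rangle|^{2}\le 2^{-2l_{1}\alpha}\xi^{2}$ reduces $\|g\|_{\mathcal{B}}$ to $L^{d^{\ast}}8^{d^{\ast}/2}N^{d^{\ast}/2}\xi$ times the geometric series $\sum_{l_{1}\ge 0}2^{l_{1}(d^{\ast}+1-\alpha)}$, which converges to at most $2$ thanks to $\alpha>2+d^{\ast}$. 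The prescribed scaling $\xi=C(L^{d^{\ast}}2N^{d^{\ast}/2}8^{d^{\ast}/2})^{-1}$ is precisely what is needed to conclude $\|g\|_{\mathcal{B}}\le C$; this bookkeeping is where most of the work lies.

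Plugging the two bounds into \ref{thm:oracle} gives
\[
\mathcal{E}(\hat d_{\lambda},\hat\Theta_{\lambda},\hat f_{\lambda})\lesssim 2^{-2M\alpha}+\frac{2^{d^{\ast}M}\log(Cn)}{n}+\frac{\|\Theta^{\ast}\|_{0}\log(pn)}{n}+\frac{\log(2/\delta)}{n}.
\]
Balancing the first two $M$-dependent terms via $2^{M}\asymp(n/\log(Cn))^{1/(2\alpha+d^{\ast})}$, which is admissible (i.e.\ $M\in\{0,\dots,n\}$) for $n\ge 2$, yields the rate $(\log(Cn)/n)^{2\alpha/(2\alpha+d^{\ast})}$. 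Collecting all constants depending on $C,K,\Gamma,\sigma,N,B_{1},B_{2}$ and $d^{\ast}$ into a single $\Lambda$ concludes the proof.
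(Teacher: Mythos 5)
Your proposal follows exactly the paper's route: specialise the infimum in Theorem~\ref{thm:oracle} to $(d^{\ast},I^{\ast},M)$, take the wavelet $L^{2}$-projection $f_{M}$ of $f^{\ast}$, check $f_{M}\in\mathcal{F}_{d^{\ast},M}(C)$ by a level-wise Cauchy--Schwarz estimate of $\Vert f_{M}\Vert_{\mathcal{B}}$ against the Besov constraint (which is where $\alpha>2+d^{\ast}$ and the choice of $\xi$ are used), bound the approximation error by $B_{2}\Vert f_{M}-f^{\ast}\Vert_{L^{2}}^{2}\lesssim 2^{-2\alpha M}\xi^{2}$, and finally balance $M$. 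The argument is correct and coincides with the paper's proof in all essential respects.
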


\begin{rem*}
If $\Theta^{\ast}$is sparse (i.e. $\Vert\Theta^{\ast}\Vert_{0}$
is small), then the dominating term in the upper bound of the excess
risk of the PAC-Bayesian estimator is of order
\[
\Big(\frac{\log(n)}{n}\Big)^{\frac{2\alpha}{2\alpha+d^{\ast}}},
\]
which is the usual minimax rate (up to a logarithmic factor) for such
estimation problems.  Note that if $d^{\ast}\ll p$, then we have
successfully circumvented the curse of dimensionality, since the dimension
which appears in the rate is now only $d^{\ast}$. As an alternative
to the wavelet construction, one can use the multivariate trigonometric
system on $[-1,1]^{d^{\ast}}$, assume $\mathbf{X}\in[-1,1]^{p}$
and $\ell^{1}$-standardised rows of $\Theta^{\ast}$ (which ensures
$\Theta^{\ast}\mathbf{X}\in[-1,1]^{d^{\ast}}$) leading to a more
direct generalisation of \citealt{alquier2013}. However, the orthogonality
assumption on $\Theta^{\ast}$ seems more natural and in line with
the literature.
\end{rem*}

\section{Proofs\label{sec:Proofs}}

We begin with a few preliminary results. The first is a classical
result on the Kullback-Leibler divergence. A proof can be found in
\citet[p. 159]{catoni2004}. Afterwards, we present two lemmata of
the \textquotedblleft PAC\textquotedblright -type, which are extensions
of \citet[Lemmata 7 \& 8]{alquier2013}. The proofs can be carried
out analogously.

Let $\mu,\nu$ be probability measures on a measurable space $(E,\mathscr{A})$.
The \emph{Kullback-Leibler divergence} of $\mu$ with respect to $\nu$
is defined via 
\[
\mathcal{K}(\mu,\nu)\coloneqq\begin{cases}
\int\log\big(\diff{\mu}{\nu}\big)\d\mu, & \text{if }\mu\ll\nu\\
\infty, & \text{otherwise}
\end{cases}.
\]

\begin{lem}
\label{lem:classicallemma}Let $\mu$ be a probability measure on
a measurable space $(E,\mathscr{A})$ and let $h\colon E\to\R$ be
a measurable function such that$\int\exp\circ h\d\mu<\infty$. With
the convention $\infty-\infty=-\infty$, it then holds that 
\begin{equation}
\log\Big(\int\exp\circ h\d\mu\Big)=\sup_{\nu}\Big(\int h\d\nu-\mathcal{K}(\nu,\mu)\Big)=-\inf_{\nu}\Big(\mathcal{K}(\nu,\mu)-\int h\d\nu\Big),\label{eq:gibbsequality}
\end{equation}
where the supremum and the infimum are taken over all probability
measures $\nu$ on $(E,\mathscr{A})$, but no generality is lost by
considering only those probability measures $\nu$ on $(E,\mathscr{A})$
such that $\nu\ll\mu$. If additionally, $h$ is bounded from above
on the support of $\mu$, then the supremum and the infimum in \ref{eq:gibbsequality}
are both attained by taking $\nu=g$ with the Gibbs-posterior probability
distribution $g$, i.e. $\diff g{\mu}\propto\exp\circ h$.
\end{lem}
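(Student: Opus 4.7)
The plan is to establish the Donsker--Varadhan variational formula by introducing the Gibbs measure $g$ with $\d g/\d\mu = (\exp\circ h)/Z$, where $Z := \int\exp\circ h\,\d\mu<\infty$, as an auxiliary reference, and comparing an arbitrary candidate $\nu$ to $g$ via a short chain-rule calculation for Radon--Nikodym densities. First I would reduce the supremum (equivalently, the infimum) to candidates $\nu\ll\mu$: if $\nu$ is not absolutely continuous with respect to $\mu$, then $\mathcal{K}(\nu,\mu)=\infty$, and under the stipulated convention $\infty-\infty=-\infty$ such $\nu$ never improve the objective and may be discarded.

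For the core computation, I note that for $\nu\ll\mu$ the density satisfies $\d\nu/\d g = Z\cdot(\d\nu/\d\mu)\cdot\exp(-h)$ $\mu$-a.s.; taking logarithms and integrating against $\nu$ yields
\[
\mathcal{K}(\nu,g) \;=\; \mathcal{K}(\nu,\mu) - \int h\,\d\nu + \log Z,
\]
which rearranges to $\int h\,\d\nu - \mathcal{K}(\nu,\mu) = \log Z - \mathcal{K}(\nu,g)$. Since $\mathcal{K}(\nu,g)\ge 0$ by Jensen's inequality applied to $-\log$, this gives $\int h\,\d\nu - \mathcal{K}(\nu,\mu)\le\log Z$ for every admissible $\nu$, hence the $\le$ direction in \ref{eq:gibbsequality}.

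For the matching lower bound and the attainment claim, I would use that $\mathcal{K}(\nu,g)=0$ holds iff $\nu=g$, by strict convexity of $x\mapsto x\log x$. Plugging $\nu=g$ into the identity above yields the equality $\log Z = \int h\,\d g - \mathcal{K}(g,\mu)$, so the supremum (and, up to a sign, the infimum) is attained at the Gibbs posterior. The additional hypothesis in the second half that $h$ is bounded from above on $\supp(\mu)$ guarantees that $\int h\,\d g$ and $\mathcal{K}(g,\mu)$ are unambiguously defined and finite, so the optimum is genuinely realised by $\nu=g$ rather than merely approached along a sequence.

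The main technical subtlety is bookkeeping with infinite values under the convention $\infty-\infty=-\infty$: the chain-rule identity above is a clean equality only when all three integrals are well-defined, and one has to verify that the target inequality remains valid (both sides being $-\infty$, or the bound being vacuous) in the pathological branches, for instance when $\int h\,\d\nu=+\infty$ or $h^-\notin L^1(\nu)$. These are handled by splitting $h=h^+-h^-$, using the integrability of $\exp\circ h$ under $\mu$ to control the positive part of $h$ on $\supp(\mu)$, and checking in each branch that the asserted inequality, read with the stated convention, still holds.
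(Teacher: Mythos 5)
Your proposal is correct and follows the standard Donsker--Varadhan argument: introduce the Gibbs measure $g$ with $\d g/\d\mu = \exp(h)/Z$, use the chain rule for Radon--Nikodym densities to obtain $\mathcal{K}(\nu,g) = \mathcal{K}(\nu,\mu) - \int h\,\d\nu + \log Z$, and conclude from nonnegativity (with equality iff $\nu = g$) of $\mathcal{K}(\nu,g)$. The paper does not supply its own proof but cites \citet[p.~159]{catoni2004}, whose argument is essentially this one, and your treatment of the bookkeeping (discarding $\nu \not\ll \mu$ via the convention $\infty-\infty=-\infty$, and using the bounded-above hypothesis to ensure $\int h\,\d g$ and $\mathcal{K}(g,\mu)$ are finite so that attainment is genuine rather than merely approached) is exactly the right way to close the pathological branches.
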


\begin{lem}
\label{lem:pacexcess}Under \ref{assu:bounded}, let $Q=8(2C+1)(\Gamma\lor(2C+1))$,
$\mathcal{E}_{n}(d,\Theta,f)\coloneqq R_{n}(d,\Theta,f)-R_{n}(d^{\ast},\Theta^{\ast},f^{\ast})$
and take 
\[
\lambda\in\Big(0,\frac{n}{Q+((2C+1)^{2}+4\sigma^{2})}\Big).
\]
Then, for all $\delta\in(0,1)$ and any probability measure $\rho\ll\pi$,
we have that 
\[
\mathcal{E}(\hat d_{\lambda},\hat{\Theta}_{\lambda},\hat f_{\lambda})\leq\frac{1}{1-\frac{\lambda((2C+1)^{2}+4\sigma^{2})}{n-Q\lambda}}\Big(\int\mathcal{E}_{n}\d\rho+\frac{\mathcal{K}(\rho,\pi)+\log(\frac{1}{\delta})}{\lambda}\Big)
\]
with $(\P\otimes\hat{\rho}_{\lambda})$-probability of at least $1-\delta$,
where $(\hat d_{\lambda},\hat{\Theta}_{\lambda},\hat f_{\lambda})\sim\hat{\rho}_{\lambda}$.
\end{lem}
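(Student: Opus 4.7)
The plan is to adapt the standard Catoni-style PAC-Bayes argument, as in \citet{alquier2013}, to the coupling $\P\otimes\hat{\rho}_{\lambda}$. Abbreviate $\theta=(d,\Theta,f)$, $\theta^{\ast}=(d^{\ast},\Theta^{\ast},f^{\ast})$, and set
\[
h(\theta,\omega)\coloneqq\lambda\bigl[\mathcal{E}(\theta)-\mathcal{E}_{n}(\theta)\bigr]-\lambda\phi(\lambda)\mathcal{E}(\theta),\qquad\phi(\lambda)\coloneqq\frac{\lambda((2C+1)^{2}+4\sigma^{2})}{n-Q\lambda},
\]
noting that $\phi(\lambda)\in(0,1)$ on the range of $\lambda$ allowed in the statement.

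\textbf{Step 1 (Bernstein-type exponential moment).} For each fixed $\theta$ in the support of $\pi$, decompose
\[
Z_{i}(\theta)\coloneqq(Y_{i}-f(\Theta\mathbf{X}_{i}))^{2}-\varepsilon_{i}^{2}=(\F-f\circ\Theta)^{2}(\mathbf{X}_{i})+2\varepsilon_{i}(\F-f\circ\Theta)(\mathbf{X}_{i}).
\]
By \ref{assu:bounded}(1) and $\|f\|_{\infty}\le C+1$ on the support of $\pi$, one has $|\F-f\circ\Theta|\le2C+1$, while $\E[Z_{i}]=\mathcal{E}(\theta)$ follows from $\E[\varepsilon_{i}|\mathbf{X}_{i}]=0$. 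Combining this bound with the sub-Gaussian moments in \ref{assu:bounded}(2) and binomial expansion, one establishes the Bernstein moment condition $\E|Z_{i}|^{k}\le\frac{k!}{2}((2C+1)^{2}+4\sigma^{2})\,\mathcal{E}(\theta)\,Q^{k-2}$ for $k\ge2$, whence the standard Bernstein mgf inequality yields $\E_{\P}[\exp h(\theta,\cdot)]\le1$ pointwise in $\theta$.

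\textbf{Step 2 (change of measure and Markov on the coupling).} The Gibbs density is $\frac{\d\hat{\rho}_{\lambda}}{\d\pi}(\theta)=e^{-\lambda R_{n}(\theta)}/Z(\omega)$ with $Z(\omega)=\int e^{-\lambda R_{n}}\,\d\pi$. A Fubini application combined with Step 1 gives
\[
\E_{\P\otimes\hat{\rho}_{\lambda}}\Bigl[\exp\bigl(h(\theta,\omega)-\log\tfrac{\d\hat{\rho}_{\lambda}}{\d\pi}(\theta)\bigr)\Bigr]=\E_{\P}\Bigl[\int e^{h(\theta,\omega)}\,\d\pi(\theta)\Bigr]\le1,
\]
since the inner weight $\log\tfrac{\d\hat{\rho}_{\lambda}}{\d\pi}$ precisely cancels the Radon--Nikodym derivative of $\hat{\rho}_{\lambda}$ with respect to $\pi$. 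Markov's inequality then yields, with $(\P\otimes\hat{\rho}_{\lambda})$-probability at least $1-\delta$,
\[
h(\hat\theta_{\lambda},\omega)\le\log\tfrac{\d\hat{\rho}_{\lambda}}{\d\pi}(\hat\theta_{\lambda})+\log(1/\delta)=-\lambda R_{n}(\hat\theta_{\lambda})-\log Z(\omega)+\log(1/\delta).
\]

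\textbf{Step 3 (variational control of $-\log Z$ and conclusion).} Applying \ref{lem:classicallemma} with the function $-\lambda R_{n}$ identifies $-\log Z=\inf_{\rho}\bigl(\mathcal{K}(\rho,\pi)+\lambda\int R_{n}\,\d\rho\bigr)$, so for every probability measure $\rho\ll\pi$ one has the deterministic bound $-\log Z(\omega)\le\mathcal{K}(\rho,\pi)+\lambda\int R_{n}\,\d\rho$. Substituting this into the Step 2 inequality, subtracting $\lambda R_{n}(\theta^{\ast})$ on both sides to replace $R_{n}(\hat\theta_{\lambda})$ and $\int R_{n}\,\d\rho$ by $\mathcal{E}_{n}(\hat\theta_{\lambda})$ and $\int\mathcal{E}_{n}\,\d\rho$ respectively, and unwinding the definition of $h$ produces
\[
\lambda(1-\phi(\lambda))\mathcal{E}(\hat\theta_{\lambda})\le\lambda\int\mathcal{E}_{n}\,\d\rho+\mathcal{K}(\rho,\pi)+\log(1/\delta).
\]
Dividing by $\lambda(1-\phi(\lambda))$ and simplifying $1/(1-\phi(\lambda))$ to the form displayed in the lemma finishes the proof.

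The main obstacle is Step 1: although the high-level structure is routine Bernstein, one has to carry through the quartic expansion of $Z_{i}(\theta)$, invoke the conditional sub-Gaussian moments with the correct combinatorial bookkeeping, and verify that the variance proxy is $((2C+1)^{2}+4\sigma^{2})\mathcal{E}(\theta)$ (so that it is scaled by the excess risk itself, which is crucial for the self-bounding denominator $1-\phi(\lambda)$) while the scale parameter lands exactly at $Q=8(2C+1)(\Gamma\vee(2C+1))$. Steps 2--3 are a streamlined adaptation of the standard PAC-Bayes template; the clever twist is that swapping $-\log Z$ for an arbitrary reference measure $\rho$ via \ref{lem:classicallemma} is what allows the bound to be stated uniformly in $\rho$ even though the original exponential moment was applied under the data-dependent posterior $\hat{\rho}_{\lambda}$.
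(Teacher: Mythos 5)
Your proposal is exactly the Catoni/Alquier--Biau PAC-Bayes template that the paper invokes (it cites \citet[Lemma 7]{alquier2013} and says the proof carries over analogously without spelling it out): Bernstein exponential moment for the per-sample excess loss $Z_i(\theta)$, change of measure together with Markov on the coupling $\P\otimes\hat\rho_\lambda$, and \ref{lem:classicallemma} to replace $-\log Z$ by $\mathcal{K}(\rho,\pi)+\lambda\int R_n\,\d\rho$ for arbitrary $\rho\ll\pi$. The Bernstein bookkeeping you flag as the main obstacle is indeed the only non-routine part, and your variance proxy $((2C+1)^2+4\sigma^2)\mathcal{E}(\theta)$ is the self-bounding ingredient that makes $\phi(\lambda)<1$ on the stated range of $\lambda$, matching the lemma's denominator.
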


\begin{lem}
\label{lem:pacexcessempirical}In the situation of \ref{lem:pacexcess},
we have 
\[
\int\mathcal{E}_{n}\d\rho\leq\Big(1+\frac{\lambda((2C+1)^{2}+4\sigma^{2})}{n-Q\lambda}\Big)\int\mathcal{E}\d\rho+\frac{\mathcal{K}(\rho,\pi)+\log(\frac{1}{\delta})}{\lambda}
\]
 with $(\P\otimes\hat{\rho}_{\lambda})$-probability of at least $1-\delta$.
\end{lem}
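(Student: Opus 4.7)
The plan is to follow the standard PAC-Bayes recipe: establish a one-sided exponential-moment estimate pointwise in $(d,\Theta,f)$, integrate against the prior $\pi$, apply Markov's inequality, and invert via the Donsker--Varadhan identity of \ref{lem:classicallemma}. This mirrors the argument of \citet[Lemma~8]{alquier2013} for the single-index case, which the paper flags as directly adaptable.

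Fix a triplet $(d,\Theta,f)$ in the support of $\pi$ and write
\[
g_i \coloneqq (Y_i - f(\Theta\mathbf{X}_i))^2 - (Y_i - \F(\mathbf{X}_i))^2 = (\F(\mathbf{X}_i) - f(\Theta\mathbf{X}_i))^2 + 2\varepsilon_i(\F(\mathbf{X}_i) - f(\Theta\mathbf{X}_i))
\]
after substituting $Y_i = \F(\mathbf{X}_i) + \varepsilon_i$, so that $\mathcal{E}_n(d,\Theta,f) = \frac{1}{n}\sum_i g_i$ and $\mathcal{E}(d,\Theta,f) = \E g_1$. On the support of $\pi$ the prior construction forces $f \in \mathcal{F}_{d,M}(C+1)$ for some $M$, hence $\|f\|_\infty \leq C+1$; combined with $\|\F\|_\infty \leq C$ this bounds $|\F - f(\Theta\cdot)|$ by $2C+1$, and \ref{assu:bounded}(ii) makes $\varepsilon$ conditionally sub-Gaussian. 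The heart of the argument is the pointwise log-Laplace estimate
\[
\log\E\bigl[\exp\bigl(\tfrac{\lambda}{n}(g_1 - (1+c)\E g_1)\bigr)\bigr] \leq 0, \qquad c \coloneqq \frac{\lambda((2C+1)^2 + 4\sigma^2)}{n - Q\lambda},
\]
a Bernstein-type manipulation: expand the exponential of the noise summand using the sub-Gaussian moment inequality, control the quadratic summand by boundedness, collect variance terms via $\E[(\F(\mathbf{X}) - f(\Theta\mathbf{X}))^2] = \mathcal{E}(d,\Theta,f)$, and absorb them into the coefficient $(1+c)\E g_1$. The specific constant $Q = 8(2C+1)(\Gamma \vee (2C+1))$ is calibrated so that $n - Q\lambda$ stays positive for admissible $\lambda$ and simultaneously absorbs both the deterministic bounded and the sub-Gaussian contributions into the single factor $c$.

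Independence of the sample raises this to $\E[\exp(\lambda(\mathcal{E}_n(d,\Theta,f) - (1+c)\mathcal{E}(d,\Theta,f)))] \leq 1$. Integrating against $\pi$ and swapping expectation and integration by Fubini gives $\E[\int \exp(\lambda(\mathcal{E}_n - (1+c)\mathcal{E}))\, d\pi] \leq 1$, and Markov's inequality then yields with $\P$-probability at least $1-\delta$,
\[
\log \int \exp\bigl(\lambda(\mathcal{E}_n - (1+c)\mathcal{E})\bigr)\, d\pi \leq \log(1/\delta).
\]
Applying \ref{lem:classicallemma} to $h = \lambda(\mathcal{E}_n - (1+c)\mathcal{E})$, which is bounded above on the support of $\pi$ by the observations above, gives for every probability measure $\rho$ on the parameter space
\[
\lambda \int \mathcal{E}_n \, d\rho - \lambda(1+c)\int \mathcal{E} \, d\rho - \mathcal{K}(\rho,\pi) \leq \log(1/\delta),
\]
which rearranges to the claimed inequality. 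Since this bound does not depend on the sampled $(\hat d_\lambda, \hat\Theta_\lambda, \hat f_\lambda)\sim\hat\rho_\lambda$, the $\P$-probability statement tensorises trivially into a $(\P\otimes \hat\rho_\lambda)$-probability statement.

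The main technical obstacle is the Bernstein-type exponential-moment bound; the multi-index structure plays no role at this stage, since $(d,\Theta,f)$ enters only through the uniform a.s.\ bound on $|\F - f(\Theta\mathbf{X})|$ on the support of $\pi$, and the computation is essentially identical to the single-index case of \citet{alquier2013}.
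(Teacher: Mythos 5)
Your proposal is correct and follows exactly the approach the paper intends: the paper provides no explicit proof here, only stating that \ref{lem:pacexcess} and \ref{lem:pacexcessempirical} ``are extensions of Alquier--Biau Lemmata 7 \& 8'' whose ``proofs can be carried out analogously,'' and your argument is precisely that adaptation — Bernstein-type exponential moment bound for $g_1$ using the prior-imposed bound $\|f\|_\infty \leq C+1$ and the conditional sub-Gaussianity of $\varepsilon$, Fubini, Markov, and the Donsker--Varadhan identity of \ref{lem:classicallemma}, with the correct observation that the resulting event is $\hat\rho_\lambda$-independent.
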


\subsection[Proof of Theorem 2]{Proof of \ref{thm:oracle}}

We extend the proof strategy by \citet{alquier2013} to the multi-index
setting with unknown active dimension.

We begin by fixing some triplet $(d,I,M)$ with $d\in\{1,\dots,p\}$,
$I=I_{1}\times\cdots\times I_{d}\in\mathcal{I}_{d}$, $M\in\{0,\dots,n\}$
as well as $\eta,\gamma\in(0,1]$ and introduce the probability measure
\begin{equation}
\rho\coloneqq\rho_{d,I,M,\eta,\gamma}\coloneqq\kappa_{d}\otimes\rho_{d,I,\eta}^{1}\otimes\rho_{d,M,\gamma}^{2},\label{eq:overallrho}
\end{equation}
where $\rho_{d,I,\eta}^{1}$ and $\rho_{d,M,\gamma}^{2}$ are the
uniform distribution with respect to $\mu_{d,I}$ and $\nu_{d,M}$
on a ball of radius $\eta$ and $\gamma$ around the oracle $\Theta_{d.I}^{\ast}$
and $f_{d,M}^{\ast}$, respectively. Specifically, we set
\begin{align}
\diff{\rho_{d,I,\eta}^{1}}{\mu_{d,I}}(\Theta) & \coloneqq\prod_{i=1}^{d}\diff{\rho_{d,I,\eta}^{1,i}}{\mu_{I_{i}}}(\vartheta_{i}),\,\forall\Theta=(\vartheta_{1},\dots,\vartheta_{d})^{\top},\quad\text{where}\label{eq:rhotheta}\\
\diff{\rho_{d,I,\eta}^{1,i}}{\mu_{I_{i}}}(\vartheta_{i}) & \propto\1_{\{\Vert\vartheta_{i}-\vartheta_{d,I}^{\ast}\Vert_{2}\leq\eta\}}\quad\text{and}\nonumber \\
\diff{\rho_{d,M,\gamma}^{2}}{\nu_{d,M}}(f) & \propto\1_{\{\Vert f-f_{d,M}^{\ast}\Vert_{\psi}\leq\gamma\}},\label{eq:rhof}
\end{align}
where $\mu_{I_{i}}$denotes the uniform distribution on $\mathcal{S}(I_{i})$.
Applying \ref{lem:pacexcess,lem:pacexcessempirical}, we have that
\begin{align*}
\mathcal{E}(\hat d_{\lambda},\hat{\Theta}_{\lambda},\hat f_{\lambda}) & \leq\frac{1}{1-\frac{\lambda((2C+1)^{2}+4\sigma^{2})}{n-Q\lambda}}\Big(\int\mathcal{E}_{n}\d\rho+\frac{\mathcal{K}(\rho,\pi)+\log(\frac{2}{\delta})}{\lambda}\Big),\qquad\text{and }\\
\int\mathcal{E}_{n}\d\rho & \leq\Big(1+\frac{\lambda((2C+1)^{2}+4\sigma^{2})}{n-Q\lambda}\Big)\int\mathcal{E}\d\rho+\frac{\mathcal{K}(\rho,\pi)+\log(\frac{2}{\delta})}{\lambda}
\end{align*}
with a probability of at least $1-\frac{\delta}{2}$, respectively.
Therefore,
\begin{equation}
\mathcal{E}(\hat d_{\lambda},\hat{\Theta}_{\lambda},\hat f_{\lambda})\leq\frac{1}{1-\frac{\lambda((2C+1)^{2}+4\sigma^{2})}{n-Q\lambda}}\Big(\Big(1+\frac{\lambda((2C+1)^{2}+4\sigma^{2})}{n-Q\lambda}\Big)\int\mathcal{E}\d\rho+2\frac{\mathcal{K}(\rho,\pi)+\log(\frac{2}{\delta})}{\lambda}\Big)\label{eq:pacineq}
\end{equation}
holds with a probability of at least $1-\delta$. To complete the
proof, we need to bound the terms on the right hand side of \ref{eq:pacineq}.

First, we deal with the Kullback-Leibler divergence term using the
following two lemmata:
\begin{lem}
\label{lem:klequal} For $\rho=\rho_{d,I,M,\eta,\gamma}$ from \ref{eq:overallrho}
and with $\pi_{d,I,M}=\kappa_{d}\otimes\mu_{d,I}\otimes\nu_{d,M}$,
we have

\[
\mathcal{K}(\rho,\pi)\leq\Vert I\Vert\log(\e p)+(\Vert I\Vert+M+1)\log(10)+\mathcal{\mathcal{K}}(\rho,\pi_{d,I,M})\eqqcolon T_{1}+\mathcal{\mathcal{K}}(\rho,\pi_{d,I,M}).
\]
\end{lem}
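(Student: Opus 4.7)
The plan is to exploit the mixture structure of $\pi$: it contains the simpler product measure $\pi_{d,I,M}$ as one of its summands with an explicit, computable weight. This reduces the KL computation to a pointwise bound on the Radon--Nikodym derivative $\mathrm d\rho/\mathrm d\pi$ in terms of $\mathrm d\rho/\mathrm d\pi_{d,I,M}$ plus a combinatorial bound on $\vert\mathcal{I}_{d,\Vert I\Vert}\vert$.

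\textbf{Execution.} Unwinding the nested definitions of $\pi$, $\pi_d$, $\mu_d$, and $\nu_d$, the measure $\pi_{d,I,M}$ appears as a summand of $\pi$ with weight
\[
w := \frac{C_\pi\, C_{\mu,d}\, C_{\nu,d}}{\vert\mathcal{I}_{d,\Vert I\Vert}\vert}\,10^{-\Vert I\Vert-M-1},
\]
so that $\pi\geq w\,\pi_{d,I,M}$ as measures on the support of $\pi_{d,I,M}$. By construction (see \ref{eq:rhotheta} and \ref{eq:rhof}), $\rho$ is the restriction of $\pi_{d,I,M}$ to a ball centred at the oracle, whence $\rho\ll\pi_{d,I,M}$, and combining the two inclusions gives $\mathrm d\rho/\mathrm d\pi\leq w^{-1}\,\mathrm d\rho/\mathrm d\pi_{d,I,M}$ $\rho$-a.s. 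Taking logarithms and integrating against $\rho$ yields $\mathcal{K}(\rho,\pi)\leq -\log w + \mathcal{K}(\rho,\pi_{d,I,M})$. The normalising constants satisfy $C_\pi,C_{\mu,d}\geq 9$ and $C_{\nu,d}\geq 9/10$, so $\log(C_\pi C_{\mu,d}C_{\nu,d})>0$ and may be dropped from the upper bound on $-\log w$. For the combinatorial factor I would embed each $I=I_1\times\cdots\times I_d\in\mathcal{I}_{d,\Vert I\Vert}$ into $\{1,\dots,p\}\times\{1,\dots,d\}$ as a subset of cardinality $\Vert I\Vert$, obtaining $\vert\mathcal{I}_{d,\Vert I\Vert}\vert\leq\binom{pd}{\Vert I\Vert}\leq(\e pd/\Vert I\Vert)^{\Vert I\Vert}$; since each $I_j$ is non-empty, $d\leq\Vert I\Vert$, hence $\e pd/\Vert I\Vert\leq\e p$ and $\log\vert\mathcal{I}_{d,\Vert I\Vert}\vert\leq\Vert I\Vert\log(\e p)$. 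Plugging these bounds into $-\log w + \mathcal{K}(\rho,\pi_{d,I,M})$ completes the proof.

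\textbf{Main obstacle.} The argument is essentially bookkeeping; the only slightly subtle point is the combinatorial bound on $\vert\mathcal{I}_{d,\Vert I\Vert}\vert$, where the constraint $d\leq\Vert I\Vert$ --- coming from the non-emptiness of every $I_j$ --- is what absorbs the unwanted factor of $d$ inside the logarithm and allows the bound to take the clean form $\Vert I\Vert\log(\e p)$ appearing in the statement.
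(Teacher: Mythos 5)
Your proof is correct and is in fact cleaner than the paper's route. The paper reduces the claim to an auxiliary \cref{lem:klpunishmentterm}, which asserts the \emph{exact} identity $\mathcal{K}(\rho,\pi)=\log G(d,I,M)+\mathcal{K}(\rho,\pi_{d,I,M})$ with $G=w^{-1}$; proving that identity requires showing that the summands in the three nested mixtures ($\kappa_c$, $\mu_{c,J}$, $\nu_{c,\tilde M}$) are mutually singular, via the auxiliary sets $\mathcal{S}_{d,\Leftrightarrow}(J)$ and $\mathcal{F}_{d,\tilde M,\neq}$. You observe instead that the lemma only needs the one-sided inequality $\mathcal{K}(\rho,\pi)\leq -\log w+\mathcal{K}(\rho,\pi_{d,I,M})$, and that $\pi\geq w\,\pi_{d,I,M}$ (as measures, since all summands of the mixture are non-negative) immediately gives $\mathrm d\pi_{d,I,M}/\mathrm d\pi\leq w^{-1}$ and hence $\mathrm d\rho/\mathrm d\pi\leq w^{-1}\,\mathrm d\rho/\mathrm d\pi_{d,I,M}$ $\rho$-a.s., which is all that is needed --- no disjoint-support argument required. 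From there on, both proofs coincide: the normalising constants are absorbed because $C_\pi C_{\mu,d}C_{\nu,d}>1$, the injection $I\mapsto\{(k,j): k\in I_j\}$ gives $\vert\mathcal{I}_{d,\Vert I\Vert}\vert\leq\binom{dp}{\Vert I\Vert}\leq(\e dp/\Vert I\Vert)^{\Vert I\Vert}$, and the non-emptiness of each $I_j$ gives $d\leq\Vert I\Vert$ and so $\Vert I\Vert\log(\e p)$. What your approach buys is the elimination of \cref{lem:klpunishmentterm} and its fairly technical proof; what you give up is the exact value of the KL gap, which is not used anywhere in the paper. One trivial remark: $\pi\geq w\,\pi_{d,I,M}$ holds globally, not merely on the support of $\pi_{d,I,M}$, so the caveat in your write-up can be dropped.
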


\begin{lem}
\label{lem:klequal2} For $\rho=\rho_{d,I,M,\eta,\gamma}$ from \ref{eq:overallrho}
and with $\pi_{d,I,M}=\kappa_{d}\otimes\mu_{d,I}\otimes\nu_{d,M}$,
we have
\[
\mathcal{K}(\rho,\pi_{d,I,M})\leq\Vert I\Vert\log\Big(\frac{3\sqrt{2}}{\eta}\Big)+4^{d}N^{d}2^{dM+1}\log\Big(\frac{C+1}{\gamma}\Big)\eqqcolon T_{2}.
\]
\end{lem}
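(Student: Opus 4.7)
The plan is to exploit the product structure of both measures. Since
\[
\rho = \kappa_{d} \otimes \rho^{1}_{d,I,\eta} \otimes \rho^{2}_{d,M,\gamma}
\qquad\text{and}\qquad
\pi_{d,I,M} = \kappa_{d} \otimes \mu_{d,I} \otimes \nu_{d,M}
\]
share their first factor and factorise in the remaining two, the chain rule for Kullback--Leibler divergences yields an additive split. Moreover, by \ref{eq:rhotheta} the measure $\rho^{1}_{d,I,\eta}$ is itself a product over the $d$ rows of $\Theta$, hence
\[
\mathcal{K}(\rho, \pi_{d,I,M})
= \sum_{i=1}^{d} \mathcal{K}\big(\rho^{1,i}_{d,I,\eta}, \mu_{I_{i}}\big) + \mathcal{K}\big(\rho^{2}_{d,M,\gamma}, \nu_{d,M}\big).
\]
Each summand compares two uniform probability measures in which one is a renormalised restriction of the other to a small ball around the oracle, so it reduces to $-\log$ of the relative mass of that ball.

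For the rows of the dimension reduction matrix, I would identify $\mathcal{S}(I_{i})$ with the Euclidean unit sphere $S^{|I_{i}|-1}$ via the active coordinates and apply a standard lower bound on the normalised surface measure of a chordal ball of radius $\eta \in (0,1]$. An estimate of the form
\[
\mu_{I_{i}}\Big(\big\{\vartheta \in \mathcal{S}(I_{i}) : \|\vartheta - \vartheta^{\ast}_{i}\|_{2} \leq \eta\big\}\Big) \geq \Big(\frac{\eta}{3\sqrt{2}}\Big)^{|I_{i}|}
\]
(where $\vartheta^{\ast}_{i}$ denotes the $i$-th row of $\Theta^{\ast}_{d,I}$, and the constant $3\sqrt{2}$ is chosen so that the bound holds uniformly in $|I_{i}|$ and $\eta$) gives $|I_{i}|\log(3\sqrt{2}/\eta)$ per row, and summing over $i=1,\dots,d$ produces the first summand $\|I\|\log(3\sqrt{2}/\eta)$.

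For the wavelet component, I would undo the pushforward under the linear map $\Phi_{d,M}$: since $\nu_{d,M}$ is the image of the uniform measure on the coefficient body $\mathcal{B}_{d,M}(C+1) \subset \R^{\mathcal{Z}_{M}^{d}}$, the set $\{f : \|f - f^{\ast}_{d,M}\|_{\psi} \leq \gamma\}$ pulls back to a norm ball of radius $\gamma$ around the oracle coefficient vector $\beta^{\ast}_{d,M}$ in $\R^{\mathcal{Z}_{M}^{d}}$. Because $f^{\ast}_{d,M}$ is chosen from $\mathcal{F}_{d,M}(C)$ one has $\|\beta^{\ast}_{d,M}\|_{\mathcal{B}} \leq C$, so the entire ball of radius $\gamma \leq 1$ around $\beta^{\ast}_{d,M}$ still sits inside $\mathcal{B}_{d,M}(C+1)$; this is exactly where the enlargement of the prior support from radius $C$ to $C+1$ is used. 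A direct volume ratio argument in $\R^{\mathcal{Z}_{M}^{d}}$ then yields
\[
\nu_{d,M}\big(\{\|f - f^{\ast}_{d,M}\|_{\psi} \leq \gamma\}\big) \geq \Big(\frac{\gamma}{C+1}\Big)^{|\mathcal{Z}_{M}^{d}|},
\]
and combined with the elementary counting bound $|\mathcal{Z}_{M}^{d}| \leq 4^{d} N^{d} 2^{dM+1}$ this delivers the second summand and completes the proof.

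The main obstacle is establishing the spherical cap bound with a constant uniform in the sphere dimension $|I_{i}|$; standard estimates carry dimension-dependent prefactors that must be absorbed into the explicit $(3\sqrt{2})^{|I_{i}|}$. Once this is in place, the volume argument in coefficient space is routine, the only subtlety being the verification that the $\gamma$-ball around the oracle lies strictly inside $\mathcal{B}_{d,M}(C+1)$, which is the sole reason the prior support is chosen slightly larger than the class over which the oracle is optimised.
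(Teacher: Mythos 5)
Your decomposition is exactly the one the paper uses: additivity of Kullback--Leibler over the product $\kappa_d\otimes\rho^1_{d,I,\eta}\otimes\rho^2_{d,M,\gamma}$ versus $\kappa_d\otimes\mu_{d,I}\otimes\nu_{d,M}$, a further row-wise split of $\mathcal{K}(\rho^1_{d,I,\eta},\mu_{d,I})$ into $\sum_i\mathcal{K}(\rho^{1,i}_{d,I,\eta},\mu_{I_i})$, the observation that $\rho$ restricted to each factor is a renormalised restriction of the prior to a ball so each divergence is $-\log$ of a prior mass, and finally the volume-ratio computation in coefficient space together with $\vert\mathcal{Z}_M^d\vert\leq 4^dN^d2^{dM+1}$. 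The wavelet part is handled identically to the paper (\ref{lem:technicallemmaf}), including the correct observation that $\gamma\le 1$ and $\|\beta^\ast_{d,M}\|_{\mathcal{B}}\le C$ keep the $\gamma$-ball inside $\mathcal{B}_{d,M}(C+1)$, which is precisely why the prior radius is enlarged from $C$ to $C+1$.

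The one step you flag but do not carry out is precisely the content of \ref{lem:technicallemmatheta}: a dimension-uniform lower bound $\mu_{I_i}(\{\|\vartheta-\vartheta^\ast_i\|_2\le\eta\})\ge(\eta/(3\sqrt{2}))^{\vert I_i\vert}$ on the normalised spherical cap measure. You are right that generic cap estimates come with dimension-dependent prefactors (e.g.\ ratios of Gamma functions), so this cannot be waved through as ``standard''. The paper avoids this by a covering argument: writing $\tilde\eta=\sqrt{\eta^2-\eta^4/4}\ge\eta/\sqrt{2}$, one covers the unit ball in $\R^{\vert I_i\vert}$ by $N_{\tilde\eta}\leq(3/\tilde\eta)^{\vert I_i\vert}$ balls of radius $\tilde\eta$; at least one covers a fraction at least $N_{\tilde\eta}^{-1}$ of the sphere, and a monotonicity-in-the-centre-length argument shows that the $\tilde\eta$-ball centred at radius $\sqrt{1-\tilde\eta^2}$ (which has exactly the same normalised coverage of the sphere as the $\eta$-ball around a point of the sphere) covers at least as much as the best covering ball. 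That is how $3\sqrt{2}$ arises with no dimension dependence. With this ingredient supplied, your proof would be complete and coincides with the paper's.
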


Thus,

\[
\mathcal{E}(\hat d_{\lambda},\hat{\Theta}_{\lambda},\hat f_{\lambda})\leq\frac{1}{1-\frac{\lambda((2C+1)^{2}+4\sigma^{2})}{n-Q\lambda}}\Big(\Big(1+\frac{\lambda((2C+1)^{2}+4\sigma^{2})}{n-Q\lambda}\Big)\int\mathcal{E}\d\rho+2\frac{T_{1}+T_{2}+\log(\frac{2}{\delta})}{\lambda}\Big)
\]
with a probability of at least $1-\delta$.

Second, we control the integral term in \ref{eq:pacineq} by splitting
it into

\begin{align}
\int\mathcal{E}(c,\Theta,f)\d\rho(c,\Theta,f) & =\mathcal{E}(d,\Theta_{d,I}^{\ast},f_{d,M}^{\ast})\nonumber \\
 & \qquad+\int\E[(f_{c,M}^{\ast}(\Theta_{c,I}^{\ast}\mathbf{X})-f(\Theta_{c,M}^{\ast}\mathbf{X}))^{2}]\d\rho(c,\Theta,f)\nonumber \\
 & \qquad+\int\E[(f(\Theta_{c,I}^{\ast}\mathbf{X})-f(\Theta\mathbf{X}))^{2}]\d\rho(c,\Theta,f)\nonumber \\
 & \qquad+\int\E[2(Y-f_{c,M}^{\ast}(\Theta_{c,I}^{\ast}\mathbf{X}))(f_{c,M}^{\ast}(\Theta_{c,I}^{\ast}\mathbf{X})-f(\Theta_{c,I}^{\ast}\mathbf{X}))]\d\rho(c,\Theta,f)\nonumber \\
 & \qquad+\int\E[2(Y-f_{c,M}^{\ast}(\Theta_{c,I}^{\ast}\mathbf{X}))(f(\Theta_{c,I}^{\ast}\mathbf{X})-f(\Theta\mathbf{X}))]\d\rho(c,\Theta,f)\nonumber \\
 & \qquad+\int\E[2(f_{c,M}^{\ast}(\Theta_{c,I}^{\ast}\mathbf{X})-f(\Theta_{c,I}^{\ast}\mathbf{X}))(f(\Theta_{c,I}^{\ast}\mathbf{X})-f(\Theta\mathbf{X}))]\d\rho(c,\Theta,f)\nonumber \\
 & \eqqcolon\mathcal{E}(d,\Theta_{d,I}^{\ast},f_{d,M}^{\ast})+U_{1}+U_{2}+U_{3}+U_{4}+U_{5}\label{eq:splitthem}
\end{align}
and treating the terms $U_{1},\dots,U_{5}$ sequentially. Note, that
integrating with respect to $\rho=\kappa_{d}\otimes\rho_{d,I,\eta}^{1}\otimes\rho_{d,M,\gamma}^{2}$
amounts to fixing the first argument at $d$ and then integrating
with respect to $\rho_{d,I,\eta}^{1}\otimes\rho_{d,I,\gamma}^{2}$. 

Similarly to \ref{eq:fandgradient}, $f=\Phi_{d,M}(\beta)\in\mathcal{F}_{d,M}(C+1)$
with $\Vert\beta-\beta_{d.M}^{\ast}\Vert_{\mathcal{B}}\le\gamma$
implies
\[
\Vert f-f_{d,M}^{\ast}\Vert_{\infty}\leq\Vert f-f_{d,M}^{\ast}\Vert_{\psi}=\Vert\beta-\beta_{d.M}^{\ast}\Vert_{\mathcal{B}}\leq\gamma.
\]
Therefore,
\begin{equation}
U_{1}=\int\E[(f_{d,M}^{\ast}(\Theta_{d,I}^{\ast}\mathbf{X})-f(\Theta_{d,I}^{\ast}\mathbf{X}))^{2}]\d\rho_{d,M,\gamma}^{2}(f)\leq\int\sup_{x\in\R^{d}}(f_{d,M}^{\ast}(x)-f(x))^{2}\d\rho_{d,M,\gamma}^{2}(f)\leq\gamma^{2}.\label{eq:u1}
\end{equation}
Any $f\in\mathcal{F}_{d,M}(C+1)$ is differentiable as a linear combination
of only finitely many basis elements. Therefore, applying the fundamental
theorem of calculus to the mapping
\[
h\colon[-1,1]\to\R,\,s\mapsto f((\Theta+s(\Theta_{d,I}^{\ast}-\Theta))\mathbf{X}(\omega))
\]
with a fixed $\omega\in\Omega$ (which we will omit from here on)
yields
\[
f(\Theta_{d,I}^{\ast}\mathbf{X})-f(\Theta\mathbf{X})=\int_{0}^{1}g'(s)\d s=\Big\langle(\Theta_{d,I}^{\ast}-\Theta)\mathbf{X},\int_{0}^{1}\nabla f((\Theta+s(\Theta_{d,I}^{\ast}-\Theta))\mathbf{X})\d s\Big\rangle
\]
and combined with \ref{eq:fandgradient} we obtain

\begin{align}
\vert f(\Theta_{d,M}^{\ast}\mathbf{X})-f(\Theta\mathbf{X})\vert & =\Big\vert\Big\langle(\Theta_{d,I}^{\ast}-\Theta)\mathbf{X},\int_{0}^{1}\nabla f((\Theta+s(\Theta_{d,I}^{\ast}-\Theta))\mathbf{X})\d s\Big\rangle\Big\vert\nonumber \\
 & \leq\Vert(\Theta_{d,I}^{\ast}-\Theta)\mathbf{X}\Vert_{2}\Big\Vert\int_{0}^{1}\nabla f((\Theta+s(\Theta_{d,I}^{\ast}-\Theta))\mathbf{X})\d s\Big\Vert_{2}\nonumber \\
 & \leq\Vert\mathbf{X}\Vert_{2}\Big(\sum_{i=1}^{d}\Vert\vartheta_{d,I,i}^{\ast}-\vartheta_{i}\Vert_{2}^{2}\Big)^{\frac{1}{2}}\sqrt{d}C\nonumber \\
 & \leq pK\Big(\sum_{i=1}^{d}\Vert\vartheta_{d,I,i}^{\ast}-\vartheta_{i}\Vert_{2}^{2}\Big)^{\frac{1}{2}}\sqrt{d}C\Pas\label{eq:gradienttrick}
\end{align}
Using the above, we deduce
\begin{align}
U_{2} & =\int\E[(f(\Theta_{d,I}^{\ast}\mathbf{X})-f(\Theta\mathbf{X}))^{2}]\d\rho_{d,I,\eta}^{1}\otimes\rho_{d,M,\gamma}^{2}(\Theta,f)\nonumber \\
 & \leq d(pKC)^{2}\int\dots\int\sum_{i=1}^{d}\Vert\vartheta_{d,I,i}^{\ast}-\vartheta_{i}\Vert_{2}^{2}\d\rho_{d,I,\eta}^{1,1}(\vartheta_{1})\dots\d\rho_{d,I,\eta}^{1,d}(\vartheta_{d})\nonumber \\
 & \leq(dpKC\eta)^{2}.\label{eq:u2}
\end{align}
By construction, $\rho_{d,M,\gamma}^{2}$ is centered around $f_{d,M}^{\ast}$
and thus
\begin{equation}
\int f(x)\d\rho_{d,M,\gamma}^{2}(f)=f_{d,M}^{\ast}(x),\,\forall x\in\R^{p}.\label{eq:u3iszero}
\end{equation}
In particular, we have
\begin{equation}
U_{3}=0.\label{eq:u3}
\end{equation}
Using Fubini's theorem together with \ref{eq:u3iszero}, we have
\begin{align}
\vert U_{4}\vert & =2\big\vert\E\big[(Y-f_{d,M}^{\ast}(\Theta_{d,I}^{\ast}\mathbf{X}))\int\int f(\Theta_{d,I}^{\ast}\mathbf{X})-f(\Theta\mathbf{X})\d\rho_{d,M,\gamma}^{2}(f)\d\rho_{d,I,\eta}^{1}(\Theta)\big]\big\vert\nonumber \\
 & =2\big\vert\E\big[(Y-f_{d,M}^{\ast}(\Theta_{d,I}^{\ast}\mathbf{X}))\int f(\Theta_{d,I}^{\ast}\mathbf{X})-f_{d,M}^{\ast}(\Theta\mathbf{X})\d\rho_{d,I,\eta}^{1}(\Theta)\big]\big\vert\nonumber \\
 & \leq2\sqrt{R(d,\Theta_{d,I}^{\ast},f_{d,M}^{\ast})}\sqrt{\E\big[\big(\int f_{d,M}^{\ast}(\Theta_{d,I}^{\ast}\mathbf{X})-f_{d,M}^{\ast}(\Theta\mathbf{X})\d\rho_{d,I,\eta}^{1}(\Theta)\big)^{2}\big]},\label{eq:u4help1}
\end{align}
where the Cauchy-Schwarz inequality has been used in the final step.
Repeating the argument from treating $U_{2}$, but now with $f=f_{d,M}^{\ast}$,
we obtain 
\begin{equation}
\E\big[\big(\int f_{d,M}^{\ast}(\Theta_{d,I}^{\ast}\mathbf{X})-f_{d,M}^{\ast}(\Theta\mathbf{X})\d\rho_{d,I,\eta}^{1}(\Theta)\big)^{2}\big]\leq(dpKC\eta)^{2}.\label{eq:u4help2}
\end{equation}
Clearly, $f=0\in\mathcal{F}_{d,M}(C)$ and thus we have by definition
of $(\Theta_{d,I}^{\ast},f_{d,M}^{\ast})$ that
\begin{equation}
R(d,\Theta_{d,I}^{\ast},f_{d,M}^{\ast})\leq R(d,\Theta_{d,I}^{\ast},f)=\E[Y^{2}]=\E[\F(\mathbf{X})^{2}]+2\E[\F(\mathbf{X})\E[\varepsilon|\mathbf{X}]]+\E[\varepsilon^{2}]\leq C^{2}+\sigma^{2}.\label{eq:u4help3}
\end{equation}
Plugging \ref{eq:u4help2,eq:u4help3} into \ref{eq:u4help1}, we have
\begin{equation}
\vert U_{4}\vert\leq2dpKC\eta\sqrt{C^{2}+\sigma^{2}}.\label{eq:u4}
\end{equation}
Finally, applying \ref{eq:gradienttrick} again yields

\begin{align}
\vert U_{5}\vert & \leq2\int\E[\vert f_{d,M}^{\ast}(\Theta_{d,I}^{\ast}\mathbf{X})-f(\Theta_{d,I}^{\ast}\mathbf{X})\vert\vert f(\Theta_{d,I}^{\ast}\mathbf{X})-f(\Theta\mathbf{X})\vert]\d\rho_{d,I,\eta}^{1}\otimes\rho_{d,M,\gamma}^{2}(\Theta,f)\nonumber \\
 & \leq2\sqrt{d}pKC\E\Big[\int\vert f_{d,M}^{\ast}(\Theta_{d,I}^{\ast}\mathbf{X})-f(\Theta_{d,I}^{\ast}\mathbf{X})\vert\Big(\sum_{i=1}^{d}\Vert\vartheta_{d,I,i}^{\ast}-\vartheta_{i}\Vert_{2}^{2}\Big)^{\frac{1}{2}}\d\rho_{d,I,\eta}^{1}\otimes\rho_{d,M,\gamma}^{2}((\vartheta_{1},\dots,\vartheta_{d})^{\top},f)\Big]\nonumber \\
 & =2\sqrt{d}pKC\int\vert f_{d,M}^{\ast}(\Theta_{d,I}^{\ast}\mathbf{X}(\omega))-f(\Theta_{d,I}^{\ast}\mathbf{X}(\omega))\vert\nonumber \\
 & \qquad\qquad\cdot\Big(\sum_{i=1}^{d}\Vert\vartheta_{d,I,i}^{\ast}-\vartheta_{i}\Vert_{2}^{2}\Big)^{1/2}\d\P\otimes\rho_{d,I,\eta}^{1}\otimes\rho_{d,M,\gamma}^{2}(\omega,(\vartheta_{1},\dots,\vartheta_{d})^{\top},f)\nonumber \\
 & \leq2\sqrt{d}pKC\sqrt{\int\big(f_{d,M}^{\ast}(\Theta_{d,I}^{\ast}\mathbf{X}(\omega))-f(\Theta_{d,I}^{\ast}\mathbf{X}(\omega))\big)^{2}\d\P\otimes\rho_{d,I,\eta}^{1}\otimes\rho_{d,M,\gamma}^{2}(\omega,(\vartheta_{1},\dots,\vartheta_{d})^{\top},f)}\nonumber \\
 & \qquad\qquad\cdot\sqrt{\int\sum_{i=1}^{d}\Vert\vartheta_{d,I,i}^{\ast}-\vartheta_{i}\Vert_{2}^{2}\d\P\otimes\rho_{d,I,\eta}^{1}\otimes\rho_{d,M,\gamma}^{2}(\omega,(\vartheta_{1},\dots,\vartheta_{d})^{\top},f)}\label{eq:u5help}\\
 & \leq2\sqrt{d}pKC\sqrt{\int\E[(f(\Theta_{d,I}^{\ast}\mathbf{X})-f(\Theta_{d,I}^{\ast}\mathbf{X}))^{2}]\d\rho_{d,M,\gamma}^{2}(f)}\sqrt{\int\sum_{i=1}^{d}\Vert\vartheta_{d,I,i}^{\ast}-\vartheta_{i}\Vert_{2}^{2}\rho_{d,I,\eta}^{1}((\vartheta_{1},\dots,\vartheta_{d})^{\top})}\nonumber \\
 & \leq2dpKC\eta\gamma,\label{eq:u5}
\end{align}
where \ref{eq:u5help} follows from the Cauchy-Schwarz inequality
for integration with respect to the product measure $\P\otimes\rho_{d,I,\eta}^{1}\otimes\rho_{d,M,\gamma}^{2}$.

Choosing $\eta=(dpn)^{-1},\gamma=n^{-1}$ when summarising \ref{eq:u1,eq:u2,eq:u3,eq:u4,eq:u5,eq:splitthem}
we have
\begin{align*}
\int\mathcal{E}(c,\Theta,f)\d\rho(c,\Theta,f) & \leq\mathcal{E}(d,\Theta_{d,I}^{\ast},f_{d,M}^{\ast})+\gamma^{2}+(dpKC\eta)^{2}+2dpKC\eta\sqrt{C^{2}+\sigma^{2}}+2dpKC\eta\gamma\\
 & \leq\mathcal{E}(d,\Theta_{d,I}^{\ast},f_{d,M}^{\ast})+\frac{\Xi_{1}}{n},
\end{align*}
where $\Xi_{1}$ is a constant only depending on $C,K$ and $\sigma$.
With these choices for $\eta$ and $\gamma$, we can use the assumption
that $n\geq(10C^{-1})\lor(30\sqrt{2}\e p^{-2})$ together with $C\geq1$
(by \ref{assu:bounded}) to bound
\begin{align*}
T_{1}+T_{2} & =\Vert I\Vert\log(\e p)+(\Vert I\Vert+M+1)\log(10)+\Vert I\Vert\log(3\sqrt{2}dpn)+4^{d}N^{d}2^{dM+1}\log(n(C+1))\\
 & \leq\Vert I\Vert\log(\e p)+(\Vert I\Vert+M+1)\log(10)+\Vert I\Vert\log(3\sqrt{2}dpn)+4^{d}N^{d}2^{dM+1}\log(n(C+1))\\
 & \leq4\Vert I\Vert\log(pn)+16\cdot4^{d}N^{d}2^{dM}\log(Cn).
\end{align*}
Choosing 
\[
\lambda=\frac{n}{Q+2((2C+1)^{2}+4\sigma^{2})},
\]
we have 
\[
\frac{1}{1-\frac{\lambda((2C+1)^{2}+4\sigma^{2})}{n-Q\lambda}}=2\qquad\text{as well as}\qquad1+\frac{\lambda((2C+1)^{2}+4\sigma^{2})}{n-Q\lambda}=\frac{3}{2}
\]
and there exists a constant $\Xi_{2}$ depending only on $C,K,\Gamma$
and $\sigma$ such that $\frac{2}{\lambda}\leq\frac{\Xi_{2}}{n}$.
Summarizing the above, we arrive at 

\begin{equation}
\mathcal{E}(\hat d_{\lambda},\hat{\Theta}_{\lambda},\hat f_{\lambda})\leq3\mathcal{E}(d,\Theta_{d,I}^{\ast},f_{d,M}^{\ast})+\frac{\Xi}{n}\Big(\Vert I\Vert\log(pn)+4^{d}N^{d}2^{dM}\log(Cn)+\log\big(\frac{2}{\delta}\big)\Big)\label{eq:oracleineqnoinf}
\end{equation}
with a probability of at least $1-\delta$, where $\Xi$ is a constant
only depending on $C,K,\Gamma$ and $\sigma$. Note that the upper
bound in \ref{eq:oracleineqnoinf} is deterministic. Choosing a triplet
$(d,I,M)$ such that this upper bound is minimised (which is always
possible, since there are only finitely many choices for $(d,I,M)$),
we have shown that
\[
\mathcal{E}(\hat d_{\lambda},\hat{\Theta}_{\lambda},\hat f_{\lambda})\leq\inf_{d,I,M}\bigg(3\mathcal{E}(d,\Theta_{d,I}^{\ast},f_{d,M}^{\ast})+\frac{\Xi}{n}\Big(\Vert I\Vert\log(pn)+4^{d}N^{d}2^{dM}\log(Cn)+\log\big(\frac{2}{\delta}\big)\Big)\bigg),
\]
with a probability of at least $1-\delta$. This completes the proof
of \ref{thm:oracle}.\hfill\qed

\subsection[Proof of Corollary 5]{Proof of \ref{cor:oracleineq}}

Plugging in $d^{\ast}$ and $I^{\ast}$ in the infimum in \ref{thm:oracle},
we obtain that
\begin{align}
\mathcal{E}(\hat d_{\lambda},\hat{\Theta}_{\lambda},\hat f_{\lambda}) & \leq\inf_{0\leq M\leq n}\bigg(3\mathcal{E}(d^{\ast},\Theta_{d^{\ast},I}^{\ast},f_{d^{\ast},M}^{\ast})+\frac{\Xi}{n}\Big(\Vert I\Vert\log(pn)+4^{d^{\ast}}N^{d^{\ast}}2^{d^{\ast}M}\log(Cn)+\log\big(\frac{2}{\delta}\big)\Big)\bigg),\nonumber \\
 & \leq\inf_{\substack{0\leq M\leq n,\\
f\in\mathcal{F}_{d^{\ast},M}(C)
}
}\bigg(3\mathcal{E}(d^{\ast},\Theta^{\ast},f)+\frac{\Xi}{n}\Big(\Vert\Theta^{\ast}\Vert_{0}\log(pn)+4^{d^{\ast}}N^{d^{\ast}}2^{d^{\ast}M}\log(Cn)+\log\big(\frac{2}{\delta}\big)\Big)\bigg),\label{eq:preoracle}
\end{align}
with a probability of at least $1-\delta$. The rest of the proof
consists of choosing $M$ to balance the terms on the right hand side
of \ref{eq:preoracle} by using an approximation of $f^{\ast}$, namely
\begin{equation}
f_{M}=\sum_{(l,k)\in\mathcal{Z}_{M}^{d^{\ast}}}\langle f^{\ast},\Psi_{l,k}\rangle\Psi_{l,k}.\label{eq:fprojection}
\end{equation}
and then determining the projection level $M$. To do this, we have
to verify that $f_{M}$ is a valid choice for $f$ in the sense that
$f_{M}\in\mathcal{F}_{d^{\ast},M}(C)$. Indeed, the Cauchy-Schwarz
inequality ensures that
\begin{align}
L^{d^{\ast}}\sum_{(l,k)\in\mathcal{Z}_{M}^{d^{\ast}}}2^{l(d/2+1)}\vert\langle f^{\ast},\Psi_{l,k}\rangle\vert & \leq L^{d^{d}}\Big(\sum_{(l,k)\in\mathcal{Z}_{M}^{d^{\ast}}}2^{2l(1-\alpha+d^{\ast}/2)}\Big)^{\frac{1}{2}}\Big(\sum_{(l,k)\in\mathcal{Z}_{M}^{d^{\ast}}}2^{2l\alpha}\vert\langle f^{\ast},\Psi_{l,k}\rangle\vert^{2}\Big)^{\frac{1}{2}}\nonumber \\
 & \leq L^{d^{\ast}}2N^{d^{\ast}/2}8^{d^{\ast}/2}\Big(\sum_{(l,k)\in\mathcal{Z}_{\infty,\infty}^{d^{\ast}}}2^{2l\alpha}\vert\langle f^{\ast},\Psi_{l,k}\rangle\vert^{2}\Big)^{\frac{1}{2}}\nonumber \\
 & \leq C.\label{eq:leqC}
\end{align}
Using \ref{assu:nonmisspecified}, we see that $f_{M}$ admits an
excess risk of

\begin{align}
\mathcal{E}(d^{\ast},\Theta^{\ast},f_{M}) & =\E[(f_{M}(\Theta^{\ast}\mathbf{X})-f^{\ast}(\Theta^{\ast}\mathbf{X}))^{2}]\nonumber \\
 & =\int_{[-B_{1},B_{1}]^{d^{\ast}}}\varrho(x)(f_{M}(x)-f^{\ast}(x))^{2}\lebesgue^{d^{\ast}}(\d x)\nonumber \\
 & \leq B_{2}\int_{[-B_{1},B_{1}]^{d^{\ast}}}(f_{M}(x)-f^{\ast}(x))^{2}\lebesgue^{d^{\ast}}(\d x)\nonumber \\
 & \leq B_{2}\sum_{(l,k)\in\mathcal{Z}_{\infty,N}^{d^{\ast}}\setminus\mathcal{Z}_{M}^{d^{\ast}}}\vert\langle f^{\ast},\Psi_{l,k}\rangle\vert^{2}\nonumber \\
 & \leq B_{2}2^{-2\alpha M}\sum_{(l,k)\in\mathcal{Z}_{\infty,N}^{d^{\ast}}\setminus\mathcal{Z}_{M}^{d^{\ast}}}2^{2l\alpha}\vert\langle f^{\ast},\Psi_{l,k}\rangle\vert^{2}\nonumber \\
 & \leq B_{2}2^{-2\alpha M}(2N^{d^{\ast}/2}8^{d^{\ast}/2})^{-1}CL^{-d^{\ast}}.\label{eq:prerate}
\end{align}
Applying \ref{eq:prerate} to \ref{eq:preoracle}, we see that there
exists a constant $\Lambda_{1}$ only depending on $C,K,\Gamma,\sigma,N,B_{1},B_{2}$
and $d^{\ast}$ such that
\[
\mathcal{E}(\hat d_{\lambda},\hat{\Theta}_{\lambda},\hat f_{\lambda})\leq\Lambda_{1}\inf_{0\leq M\leq n}\Big(2^{-2\alpha M}+2^{d^{\ast}M}\frac{\log(Cn)}{n}+\frac{\Vert\Theta^{\ast}\Vert_{0}\log(pn)}{n}+\frac{\log\big(\frac{2}{\delta}\big)}{n}\Big)
\]
with a probability of at least $1-\delta$. To balance the order of
the terms depending on $M$, we choose
\[
M=\left\lceil \log\Big(\frac{n}{\log(Cn)}\Big)\Big/((2\alpha+d^{\ast})\log(2))\right\rceil 
\]
and obtain that for some constant $\Lambda$ depending only on $C,K,\Gamma,\sigma,N,B_{1},B_{2}$
and $d^{\ast}$, we have 
\[
\mathcal{E}(\hat d_{\lambda},\hat{\Theta}_{\lambda},\hat f_{\lambda})\leq\Lambda\bigg(\Big(\frac{\log(Cn)}{n}\Big)^{\frac{2\alpha}{2\alpha+d^{\ast}}}+\frac{\Vert\Theta^{\ast}\Vert_{0}\log(pn)}{n}+\frac{\log\big(\frac{2}{\delta}\big)}{n}\bigg)
\]
with a probability of at least $1-\delta$. This completes the proof
of \ref{cor:oracleineq}.\hfill\qed

\subsection{Proofs of auxiliary lemmata}
\begin{proof}[Proof of \ref{lem:klequal}]
 We employ another auxiliary lemma:
\begin{lem}
\label{lem:klpunishmentterm}It holds that

\begin{align*}
\mathcal{K}(\rho_{d,I,M,\eta,\gamma},\pi) & =\log(G(d,I,M))+\mathcal{K}(\rho_{d,I,M,\eta,\gamma},\pi_{d,I,M}),\qquad\text{where}\\
G(d,I,M) & \coloneqq\frac{1}{721}(1-10^{-p})(1-10^{(1-p)d-1})(10-10^{-n})10^{\Vert I\Vert+M+1}\vert\mathcal{I}_{d,\Vert I\Vert}\vert.
\end{align*}
\end{lem}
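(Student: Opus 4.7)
The plan is to exploit the explicit mixture structure of $\pi$ and to isolate the single summand $\pi_{d,I,M}=\delta_d\otimes\mu_{d,I}\otimes\nu_{d,M}$ via a Radon--Nikodym chain-rule argument. Combining the definitions of $\pi$, $\mu_d$ and $\nu_d$ from \ref{sec:estimation}, one can write
\[
\pi=\sum_{c,J,M'}w(c,J,M')\,\pi_{c,J,M'},\qquad w(d,I,M)=C_{\pi}\,C_{\mu,d}\,C_{\nu,d}\,\frac{10^{-(\Vert I\Vert+M+1)}}{\vert\mathcal{I}_{d,\Vert I\Vert}\vert},
\]
where the outer sum runs over $c\in\{1,\dots,p\}$, $J\in\mathcal{I}_c$ and $M'\in\{0,\dots,n\}$. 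Substituting the explicit values of the three normalising constants (all of the form $9/(\cdot)$) matches $1/w(d,I,M)$ with $G(d,I,M)$.

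The key structural step is to verify that the component measures $\pi_{c,J,M'}$ of this mixture are pairwise mutually singular, so that only one summand effectively contributes at a generic point of the support. For $c\neq c'$ this is immediate from the Dirac masses $\delta_c,\delta_{c'}$. For $J\neq J'$ at a common $c$, the supports $\mathcal{S}_c(J)$ and $\mathcal{S}_c(J')$ are products of coordinate unit spheres, and their intersection is the product of the spheres associated with the entrywise intersection $J\cap J'$; as soon as some coordinate set differs, this intersection lies in a sphere of strictly smaller dimension than $\mathcal{S}(J_i)$ (or is empty), and hence carries zero mass under $\mu_{d,J}$. For $M'\neq M''$ at a common $c$, the push-forwards $\nu_{c,M'}$ and $\nu_{c,M''}$ live on affine subspaces of different (finite) dimensions, and the same dimensionality argument applies.

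Mutual singularity implies that on the support of $\pi_{d,I,M}$ no other summand of the mixture charges any set, hence $d\pi/d\pi_{d,I,M}=w(d,I,M)$ almost surely with respect to $\pi_{d,I,M}$. Since $\rho=\rho_{d,I,M,\eta,\gamma}$ is by construction in \ref{eq:overallrho}--\ref{eq:rhof} absolutely continuous with respect to $\pi_{d,I,M}$, the chain rule for Radon--Nikodym derivatives yields $d\rho/d\pi=w(d,I,M)^{-1}\,d\rho/d\pi_{d,I,M}$ on the support of $\rho$. Integrating the logarithm against $\rho$ then gives
\[
\mathcal{K}(\rho,\pi)=-\log w(d,I,M)+\mathcal{K}(\rho,\pi_{d,I,M})=\log G(d,I,M)+\mathcal{K}(\rho,\pi_{d,I,M}),
\]
which is the claimed identity.

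The only genuine obstacle is the mutual-singularity verification for the sparsity patterns, which rests on the geometric observation about entrywise intersection of unit spheres on possibly different coordinate planes; everything else is a mechanical computation of the mixing weight in terms of the three normalising constants defined in \ref{sec:estimation}.
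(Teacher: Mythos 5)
Your proposal is correct and follows essentially the same route as the paper's proof: both hinge on the observation that the component measures of the mixture $\pi$ are concentrated on pairwise essentially disjoint sets (distinct sparsity patterns for the $\mu_{d,J}$, distinct resolution levels for the $\nu_{d,M}$, Dirac masses for $d$), so that on the support of $\rho$ only the summand $\pi_{d,I,M}$ contributes and $\diff{\pi_{d,I,M}}{\pi}$ is the constant $w(d,I,M)^{-1}$; the paper reaches the same conclusion by integrating $\diff{\rho}{\pi}$ over product rectangles and discarding the vanishing summands, whereas you phrase it as mutual singularity followed by the Radon--Nikodym chain rule. One small remark: carrying out the arithmetic for $w(d,I,M)=C_{\pi}C_{\mu,d}C_{\nu,d}\,10^{-(\Vert I\Vert+M+1)}/\vert\mathcal{I}_{d,\Vert I\Vert}\vert$ with $C_{\pi}=9/(1-10^{-p})$, $C_{\mu,d}=9/(1-10^{(1-p)d-1})$, $C_{\nu,d}=9/(10-10^{-n})$ yields a factor $9^{3}=729$ in the denominator, so the constant $721$ in the lemma as stated appears to be a typographical slip; your computation would produce $729$.
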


\parindent=0mmNow, we can combine
\[
\vert\mathcal{I}_{d,\Vert I\Vert}\vert=\vert\{J\mid\emptyset\neq J=J_{1}\times\cdots\times J_{d},\,J_{1},\dots,J_{d}\subseteq\{1,\dots,p\}\}\vert\leq{dp \choose \Vert I\Vert}
\]
with the basic inequality ${dp \choose \Vert I\Vert}\leq\big(\frac{dp\e}{\Vert I\Vert}\big)^{\Vert I\Vert}$
and the fact that $\Vert I\Vert\geq d$, to obtain 
\begin{align*}
\mathcal{K}(\rho_{d,I,M,\eta,\gamma},\pi) & =\log(G(d,I,M))+\mathcal{K}(\rho_{d,I,M,\eta,\gamma},\pi_{d,I,M})\\
 & \leq\log\bigg(10^{\Vert I\Vert+M+1}{dp \choose \Vert I\Vert}\bigg)+\mathcal{K}(\rho_{d,I,M,\eta,\gamma},\pi_{d,I,M})\\
 & \leq\Vert I\Vert\log(\e p)+(\Vert I\Vert+M+1)\log(10)+\mathcal{K}(\rho_{d,I,M,\eta,\gamma},\pi_{d,I,M}).\qedhere
\end{align*}

\end{proof}
\begin{proof}[Proof of \ref{lem:klequal2}]
We split the proof into two further auxiliary lemmata:
\begin{lem}
\label{lem:technicallemmatheta}For $\rho_{d,I,\eta}^{1}$ from \ref{eq:rhotheta},
we have 
\[
\mathcal{K}(\rho_{d,I,\eta}^{1},\mu_{d,I})\leq\Vert I\Vert\log\Big(\frac{3\sqrt{2}}{\eta}\Big),\,\forall\eta\in(0,1].
\]
\end{lem}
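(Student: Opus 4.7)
The plan is to exploit the product structure of the two measures and reduce the statement to a single geometric estimate on the normalised surface measure of a spherical cap. Since $\mu_{d,I}=\bigotimes_{i=1}^{d}\mu_{I_{i}}$ and the density in \ref{eq:rhotheta} factorises over $i=1,\dots,d$, the chain rule for the Kullback--Leibler divergence yields
\[
\mathcal{K}(\rho_{d,I,\eta}^{1},\mu_{d,I})=\sum_{i=1}^{d}\mathcal{K}(\rho_{d,I,\eta}^{1,i},\mu_{I_{i}}).
\]
Each $\rho_{d,I,\eta}^{1,i}$ is the uniform distribution on the spherical cap $A_{i}\coloneqq\{\vartheta\in\mathcal{S}(I_{i}):\Vert\vartheta-\vartheta_{d,I,i}^{\ast}\Vert_{2}\le\eta\}$, hence $\mathcal{K}(\rho_{d,I,\eta}^{1,i},\mu_{I_{i}})=-\log\mu_{I_{i}}(A_{i})$, and the lemma reduces to establishing the per-row bound $\mu_{I_{i}}(A_{i})\ge(\eta/(3\sqrt{2}))^{\vert I_{i}\vert}$.

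To estimate the cap volume, write $m\coloneqq\vert I_{i}\vert$ and identify $\mathcal{S}(I_{i})$ with the unit sphere $S^{m-1}\subset\R^{m}$. By rotation invariance of $\mu_{I_{i}}$ I may assume $\vartheta_{d,I,i}^{\ast}=e_{m}$; writing $\vartheta=(x,y)\in\R^{m-1}\times\R$ with $\Vert x\Vert_{2}^{2}+y^{2}=1$, the chord condition $\Vert\vartheta-e_{m}\Vert_{2}\le\eta$ translates into $y\ge1-\eta^{2}/2$, so the cap projects orthogonally onto the Euclidean ball $\{\Vert x\Vert_{2}\le r\}$ with $r=\eta\sqrt{1-\eta^{2}/4}$. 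Using the upper-hemisphere parametrisation $d\sigma=dx/\sqrt{1-\Vert x\Vert_{2}^{2}}\ge dx$, the surface area of the cap is at least $\omega_{m-1}r^{m-1}$, where $\omega_{m-1}$ denotes the volume of the Euclidean unit ball in $\R^{m-1}$. For $\eta\in(0,1]$ the bound $r\ge\eta/\sqrt{2}$ holds, and inserting the standard expressions for $\omega_{m-1}$ and $\vert S^{m-1}\vert$ yields
\[
\mu_{I_{i}}(A_{i})\ge\frac{\Gamma(m/2)}{2\sqrt{\pi}\,\Gamma((m+1)/2)}\Big(\frac{\eta}{\sqrt{2}}\Big)^{m-1}.
\]

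The only remaining step is to verify that this lower bound dominates $(\eta/(3\sqrt{2}))^{m}$ uniformly over integers $m\ge1$ and $\eta\in(0,1]$. This reduces to $\Gamma(m/2)/\Gamma((m+1)/2)\ge\sqrt{2\pi}\,\eta/3^{m}$, which is immediate for $m=1$ (left side $\sqrt{\pi}$, right side at most $\sqrt{2\pi}/3$) and for $m\ge2$ follows because the left side decays only polynomially (as $\sqrt{2/m}$ by Gautschi's inequality) while the right side decays exponentially. Summing the per-row estimates then gives $\mathcal{K}(\rho_{d,I,\eta}^{1},\mu_{d,I})\le\Vert I\Vert\log(3\sqrt{2}/\eta)$, as desired. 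The main obstacle is tracking the constant through the cap estimate; the explicit value $3\sqrt{2}$ is not sharp, but is chosen large enough to simultaneously absorb the Gamma-ratio and the factor $\sqrt{2}$ coming from the bound $r\ge\eta/\sqrt{2}$.
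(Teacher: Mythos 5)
Your proof is correct, and it takes a genuinely different route from the paper's. After the common reduction (tensorisation of the KL divergence over rows, and identification of $\mathcal{K}(\rho_{d,I,\eta}^{1,i},\mu_{I_i})$ with $-\log\mu_{I_i}(A_i)$ for the cap $A_i$), the paper lower-bounds $\mu_{I_i}(A_i)$ via a covering-number argument: it reparametrises the $\eta$-cap around a point on the sphere as a $\tilde\eta$-ball around an interior point at radius $\sqrt{1-\tilde\eta^2}$, invokes the standard covering bound $N_{\tilde\eta}\leq(3/\tilde\eta)^{|I_i|}$ for the unit ball, and then uses a pigeonhole/comparison argument to show the oracle cap must cover at least a $N_{\tilde\eta}^{-1}$-fraction of the sphere, concluding with $\tilde\eta\geq\eta/\sqrt{2}$. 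You instead compute the cap measure directly: you project the cap onto the equatorial disc of radius $r=\eta\sqrt{1-\eta^2/4}\geq\eta/\sqrt{2}$, use the hemisphere parametrisation $\mathrm{d}\sigma=\mathrm{d}x/\sqrt{1-\|x\|^2}\geq\mathrm{d}x$ to lower-bound the cap area by the disc volume $\omega_{m-1}r^{m-1}$, and then verify that the resulting Gamma-function ratio dominates $(\eta/(3\sqrt{2}))^{m}$ uniformly in $m$ and $\eta$. The paper's route is softer and avoids any Gamma-function bookkeeping, at the cost of the somewhat delicate geometric claim that moving the ball centre toward radius $\sqrt{1-\tilde\eta^2}$ only enlarges the spherical coverage; your route replaces that comparison argument with an explicit and self-contained volume bound, at the cost of a final (easy but numerical) check on the Gamma ratio. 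Both yield the same, non-sharp, constant $3\sqrt{2}$. The one place to tighten your write-up is the $m\geq2$ case of the Gamma inequality: invoking Gautschi's inequality is fine, but you should note that the ratio $\Gamma(m/2)/\Gamma((m+1)/2)$ is decreasing in $m$ and compare it at each $m$ directly (or note that the comparison ratio is increasing in $m$), since the phrase ``follows because the left side decays polynomially'' leaves the finite-$m$ verification implicit.
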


\begin{lem}
\label{lem:technicallemmaf}For $\rho_{d,M,\gamma}^{2}$ from \ref{eq:rhotheta},
we have
\[
\mathcal{K}(\rho_{d,M,\gamma}^{2},\nu_{d,M})\leq2^{3d}N^{d}2^{M+1}\log\Big(\frac{C+1}{\gamma}\Big),\,\forall\gamma\in(0,1].
\]
\end{lem}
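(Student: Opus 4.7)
The plan is to transfer the computation to the (finite-dimensional) wavelet coefficient space, where the two measures become uniform distributions on concentric balls in the norm $\|\cdot\|_\mathcal{B}$, and then to bound the resulting log-volume ratio by counting $|\mathcal{Z}_M^d|$.

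First I would exploit that $\Phi_{d,M}\colon\mathcal{B}_{d,M}(C+1)\to\mathcal{F}_{d,M}(C+1)$ is a bijection and that both $\nu_{d,M}$ and $\rho_{d,M,\gamma}^{2}$ are defined as push-forwards under $\Phi_{d,M}$. Since the Kullback--Leibler divergence is invariant under measurable bijections, it is equivalent to prove the bound for the two uniform measures on coefficient space: $\tilde{\nu}_{d,M}$, which is uniform on $\mathcal{B}_{d,M}(C+1)$, and its restriction $\tilde{\rho}$ proportional to $\mathbf{1}_{\{\|\beta-\beta_{d,M}^{\ast}\|_{\mathcal{B}}\le\gamma\}}$.

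Next I would verify that the $\|\cdot\|_{\mathcal{B}}$-ball $B_{\gamma}(\beta_{d,M}^{\ast})$ of radius $\gamma$ is fully contained in $\mathcal{B}_{d,M}(C+1)$: since $f_{d,M}^{\ast}\in\mathcal{F}_{d,M}(C)$ one has $\|\beta_{d,M}^{\ast}\|_{\mathcal{B}}\le C$, and the triangle inequality together with $\gamma\le1$ yields $\|\beta_{d,M}^{\ast}+v\|_{\mathcal{B}}\le C+1$ for every $\|v\|_{\mathcal{B}}\le\gamma$. Consequently $\tilde{\rho}$ is genuinely the normalised restriction of $\tilde{\nu}_{d,M}$ to the smaller ball, and a direct computation gives
\[
\mathcal{K}(\tilde{\rho},\tilde{\nu}_{d,M})=\log\frac{\mathrm{vol}(\mathcal{B}_{d,M}(C+1))}{\mathrm{vol}(B_{\gamma}(\beta_{d,M}^{\ast}))}.
\]
Because $\|\cdot\|_{\mathcal{B}}$ is a norm on the finite-dimensional space $\R^{\mathcal{Z}_{M}^{d}}$, both numerator and denominator are Lebesgue volumes of norm-balls that scale homogeneously: $\mathrm{vol}(B_{R})=R^{D}\mathrm{vol}(B_{1})$ with $D=|\mathcal{Z}_{M}^{d}|$, and translation invariance makes the centre irrelevant. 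Hence the ratio collapses to $((C+1)/\gamma)^{D}$ and the divergence equals $D\log((C+1)/\gamma)$.

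It remains to estimate $D=|\mathcal{Z}_{M}^{d}|$. Splitting according to the two clauses in the definition, the first set contributes $(2N+1)^{d}$ points, while the second contributes at most $\sum_{l_{1}=0}^{M}(2\cdot 2^{l_{1}}N+1)^{d}(2^{d}-1)$. Bounding $2\cdot 2^{l_{1}}N+1\le 2^{l_{1}+2}N$ (valid for $N\ge1$, $l_{1}\ge0$) and summing the resulting geometric series in $l_{1}$ gives a bound of the form $c^{d}N^{d}2^{dM+1}$ for an explicit absolute constant $c$; combining with the first part and absorbing constants yields $D\le 2^{3d}N^{d}2^{dM+1}$, which plugged into the log-volume formula produces the stated inequality. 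The main obstacle is nothing conceptual but rather keeping track of these constants in the counting of $\mathcal{Z}_{M}^{d}$ so that the clean bound with $2^{3d}$ and $N^{d}$ survives; everything else is a routine application of the uniform-measure KL identity together with the triangle-inequality inclusion.
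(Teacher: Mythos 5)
Your proposal follows the same route as the paper: transfer to coefficient space via the push-forward $\Phi_{d,M}$, observe that the radius-$\gamma$ ball around $\beta_{d,M}^{\ast}$ sits inside $\mathcal{B}_{d,M}(C+1)$ by the triangle inequality (using $\|\beta_{d,M}^{\ast}\|_{\mathcal{B}}\le C$ and $\gamma\le1$), compute the KL divergence as minus the log of the normalising constant, use homogeneity and translation invariance of Lebesgue volumes of $\|\cdot\|_{\mathcal{B}}$-balls to get $\bigl((C+1)/\gamma\bigr)^{-|\mathcal{Z}_M^d|}$, and finally bound $|\mathcal{Z}_M^d|$. The one place to be careful is the constant in that last count: you correctly carry out the counting and obtain $|\mathcal{Z}_M^d|\le 2^{3d}N^{d}2^{dM+1}$, whereas the lemma as printed has $2^{3d}N^{d}2^{M+1}$ (with $2^{M+1}$ rather than $2^{dM+1}$) and the downstream \ref{lem:klequal2} and \ref{thm:oracle} use $4^{d}N^{d}2^{dM+1}$; the paper's own proof just asserts $|\mathcal{Z}_M^d|\le 4^{d}N^{d}2^{dM+1}$ without showing the count. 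These three constants are mutually inconsistent, which looks like a typo in the lemma statement; your $8^{d}N^{d}2^{dM+1}$ is the bound the displayed counting actually yields, and only the $c^{d}N^{d}2^{dM}$ order matters downstream since the final constant $\Xi$ is unspecified.
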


\parindent=0mmThe assertion then follows directly via
\begin{align*}
\mathcal{K}(\rho,\pi_{d,I,M}) & =\mathcal{K}(\kappa_{d}\otimes\rho_{d,I,\eta}^{1}\otimes\rho_{d,M,\gamma}^{2},\kappa_{d}\otimes\mu_{d,I}\otimes\nu_{d,M})\\
 & =\mathcal{K}(\kappa_{d},\kappa_{d})+\mathcal{K}(\rho_{d,I,\eta}^{1},\mu_{d,I})+\mathcal{K}(\rho_{d,M,\gamma}^{2},\nu_{d,M})\\
 & \leq\Vert I\Vert\log\Big(\frac{3\sqrt{2}}{\eta}\Big)+2^{3d}N^{d}2^{M+1}\log\Big(\frac{C+1}{\gamma}\Big)\eqqcolon T_{2}.\qedhere
\end{align*}
\end{proof}
\begin{proof}[Proof of \ref{lem:klpunishmentterm}]
 To simplify the notation we write $\rho=\rho_{d,I,M,\eta,\gamma}$
and $\pi_{d,I,M}=\kappa_{d,}\otimes\mu_{d,I}\otimes\nu_{d,M}$. We
will show that
\begin{equation}
\diff{\rho}{\pi}=G(d,I,M)\diff{\rho}{\pi_{d,I,M}}\label{eq:rhodensity}
\end{equation}
from which we can deduce
\[
\mathcal{K}(\rho,\pi)=\int\log\Big(\diff{\rho}{\pi}\Big)\d\rho=\log(G(d,I,M))+\int\log\Big(\diff{\rho}{\pi_{d,I,M}}\Big)\d\rho=\log(G(d,I,M))+\mathcal{K}(\rho,\pi_{d,I,M}).
\]
For \ref{eq:rhodensity}, we need to show that
\begin{equation}
\rho(A)=\int_{A}G(d,I,M)^{-1}\diff{\rho}{\pi}\d\pi_{d,I,M}\label{eq:rhodensity2}
\end{equation}
holds for all $A=A_{1}\times A_{2}\times A_{3}$ with $A_{1}\in2^{\{1,\dots,p\}}$,
$A_{2}\in\mathscr{B}_{\mathcal{S}}$ and $A_{3}\in\mathscr{B}_{\mathcal{F}(C+1)}$.Observe
that for the sets
\begin{align*}
\mathcal{S}_{d,\Leftrightarrow}(J) & \coloneqq\{\Theta=(\vartheta_{1},\dots,\vartheta_{d})^{\top}\in\mathcal{S}_{d}\mid(\vartheta_{i,j}\neq0\Leftrightarrow j\in J_{i})\,\forall i\in\{1,\dots,d\},j\in\{1,\dots,p\}\}\\
\mathcal{F}_{d,\tilde M,\neq}(C+1) & \coloneqq\Big\{ f=\Phi_{d,\tilde M}(\beta)\in\mathcal{F}_{d,\tilde M}(C+1)\,\\
 & \qquad\qquad\Big|\,\exists(l,(k_{1},k_{2}))\in\mathcal{Z}_{\tilde M}^{d}:((\Vert k_{1}\Vert_{\infty}=2^{l}\tilde M)\land(\beta_{l,k}\neq0))\land(k_{2}\neq0)\Big\}
\end{align*}
with $J=J_{1}\times\cdots\times J_{d}\in\mathcal{I}_{d}$ and $\tilde M\in\{0,\dots,n\}$,
we have
\begin{equation}
\kappa_{d}(\{d\}\}=\mu_{d,J}(\mathcal{S}_{d,\Leftrightarrow}(J))=\nu_{d,\tilde M}(\mathcal{F}_{d,\tilde M,\neq}(C+1))=1.\label{eq:eqneq}
\end{equation}
In particular, \ref{eq:eqneq} holds for $J=I$ and $\tilde M=M$.
Since also $\rho(\{d\}\times\mathcal{S}_{d,\Leftrightarrow}(I)\times\mathcal{F}_{d,M,\neq}(C+1))=1$,
no generality is lost in additionally assuming that
\[
A_{1}=\{d\},\qquad A_{2}\subseteq\mathcal{S}_{d,\Leftrightarrow}(I)\qquad\text{and}\qquad A_{3}\subseteq\mathcal{F}_{d,M,\neq}(C+1).
\]
Now note that 
\begin{equation}
\{c\}\cap\{d\}=\emptyset\,\forall c\neq d,\quad\mathcal{S}_{d,\Leftrightarrow}(J)\cap\mathcal{S}_{d,\Leftrightarrow}(I)=\emptyset\,\forall J\neq I\quad\text{and}\quad\mathcal{F}_{d,\tilde M,\neq}(C+1)\cap\mathcal{F}_{d,M,\neq}(C+1)=\emptyset\,\forall\tilde M\neq M.\label{eq:emptysets}
\end{equation}
Combining \ref{eq:eqneq} with \ref{eq:emptysets}, we see that
\[
\int_{A_{2}}\diff{\rho}{\pi}\d\mu_{d,J}=0\forall J\neq I\qquad\text{and}\qquad\int_{A_{3}}\diff{\rho}{\pi}\d\nu_{d,M}=0\forall\tilde M\ne M.
\]
Therefore, repeated application of Fubini's theorem yields
\begin{align*}
\rho(A) & =\int_{A}\diff{\rho}{\pi}\d\pi\\
 & =\sum_{c=1}^{p}10^{-c}\int_{A}\diff{\rho}{\pi}\d\pi_{c}\Big/\Big(\frac{1}{\text{9}}(1-10^{-p})\Big)\\
 & =\int_{A}\diff{\rho}{\pi}\d\kappa_{d}\otimes\mu_{d}\otimes\nu_{d}\Big/\Big(\frac{10^{d}}{\text{9}}(1-10^{-p})\Big)\\
 & =\int_{A_{2}\times A_{3}}\diff{\rho}{\pi}(\Theta,f)\d\mu_{d}\otimes\nu_{d}(\Theta,f)\Big/\Big(\frac{10^{d}}{\text{9}}(1-10^{-p})\Big)\\
 & =G(d,I,M)^{-1}\int_{A_{2}\times A_{3}}\diff{\rho}{\pi}(\Theta,f)\d\mu_{d}\otimes\nu_{d}(\Theta,f)\\
 & =G(d,I,M)^{-1}\int_{A}\diff{\rho}{\pi}\d\pi_{d,I,M}.
\end{align*}
Thus, we have shown \ref{eq:rhodensity2}.
\end{proof}
\begin{proof}[Proof of \ref{lem:technicallemmatheta}]
We will show that 
\begin{equation}
\mathcal{K}(\rho_{d,I,\eta}^{1,i},\mu_{I_{i}})\leq\vert I_{i}\vert\log\Big(\frac{3\sqrt{2}}{\eta}\Big),\,\forall\eta\in(0,1],\,i=1,\dots,d.\label{eq:mainwork}
\end{equation}
 The assertion follows immediately via
\[
\mathcal{K}(\rho,\mu_{d,I})=\mathcal{K}\Big(\bigotimes_{i=1}^{d}\rho_{d,I,\eta}^{1,i},\bigotimes_{i=1}^{d}\mu_{I_{i}}\Big)=\sum_{i=1}^{d}\mathcal{K}(\rho_{d,I,\eta}^{1,i},\mu_{I_{i}})\leq\sum_{i=1}^{d}\vert I_{i}\vert\log\Big(\frac{3\sqrt{2}}{\eta}\Big)=\Vert I\Vert\log\Big(\frac{3\sqrt{2}}{\eta}\Big),
\]
where the first equality holds barring a slight breach of conventions
for product measures. To show \ref{eq:mainwork}, fix $i\in\{1,\dots,d\}$
and for simplicity of the notation, set $\rho\coloneqq\rho_{d,I,\eta}^{1,i}$
and $J=I_{i}$. Plugging the $\mu_{J}$-density of $\rho$ into the
definition of the Kullback-Leibler divergence, we easily obtain 
\begin{equation}
\mathcal{K}(\rho,\mu_{J})=-\log\Big(\int\1_{\{\Vert\vartheta-\vartheta_{d,I,i}^{\ast}\Vert_{2}\leq\eta\}}\mu_{J}(\d\vartheta)\Big)=-\log(\tilde{\mu}(\{\vartheta\in\R^{\vert J\vert}\mid\Vert\vartheta-\tilde{\vartheta}^{\ast}\Vert_{2}\leq\eta\}))\label{eq:proplowerbound}
\end{equation}
 where $\tilde{\vartheta}^{\ast}$ is the projection of $\vartheta_{d,I,i}^{\ast}$
onto the coordinates whose indices are elements of $J$ and $\tilde{\mu}_{J}$
denotes the uniform distribution on the unit sphere in $\R^{\vert J\vert}$.

We want to show a lower bound for $\tilde{\mu}(\{\vartheta\in\R^{\vert J\vert}\mid\Vert\vartheta-\tilde{\vartheta}^{\ast}\Vert_{2}\leq\eta\})$,
which is the proportion of the surface of the unit sphere in $\R^{\vert J\vert}$
which is covered by the $\eta$-ball around $\tilde{\vartheta}^{\ast}$
to the surface of the entire unit sphere. By rotational symmetry of
the uniform distribution on this sphere, no generality is lost by
assuming $\tilde{\vartheta}^{\ast}=(1,0,\dots,0)^{\top}\in\R^{\vert J\vert}$.

For any $\tilde{\vartheta}=(\tilde{\vartheta}_{1},\dots,\vartheta_{\vert J\vert})^{\top}\neq0$
with $\Vert\tilde{\vartheta}\Vert_{2}\leq1$, the $\eta$-ball around
$\tilde{\vartheta}^{\ast}$ covers the same part of the surface of
the unit sphere as the $\tilde{\eta}$-ball around $\tilde{\vartheta}$,
where $\tilde{\eta}=\sqrt{\eta^{2}\Vert\tilde{\vartheta}\Vert_{2}-2\Vert\tilde{\vartheta}\Vert_{2}+\Vert\tilde{\vartheta}\Vert_{2}^{2}+1}$.
Using this dependence between $\tilde{\vartheta},\eta$ and $\tilde{\eta}$,
it is easily checked that $\Vert\tilde{\vartheta}\Vert_{2}=\sqrt{1-\tilde{\eta}^{2}}$
is solved for $0<\tilde{\eta}=\sqrt{\eta^{2}-\frac{1}{4}\eta^{4}}<1$.
Henceforth, fix this $\tilde{\eta}$. Suppose that $N_{\tilde{\eta}}$
is the smallest number of $\tilde{\eta}$-balls with centers in the
unit ball that suffice to cover the entire unit ball. Denote their
centers by $t_{1},\dots,t_{N_{\tilde{\eta}}}$. In particular, these
balls cover the entire unit sphere and therefore, at least one of
them covers at least $N_{\tilde{\eta}}^{-1}$ of the surface of the
unit sphere. This can only be the case for $t_{j}\neq0$, because
otherwise $\tilde{\eta}<1$ implies $\{\vartheta\in\R^{\vert J\vert}\mid\Vert\vartheta-t_{j}\Vert_{2}\leq\tilde{\eta}\}\cap\{\vartheta\in\R^{\vert J\vert}\mid\Vert\vartheta\Vert_{2}=1\}=\emptyset$.
If we now change the length of such a $\vartheta\coloneqq t_{j}\neq0$
(without changing its orientation and without changing $\tilde{\eta}$),
the coverage of the unit sphere provided by the corresponding $\tilde{\eta}$-ball
also changes. In particular, we will show that if $\Vert\vartheta\Vert_{2}\neq\sqrt{1-\tilde{\eta}^{2}}$,
then decreasing (or increasing) $\Vert\vartheta\Vert_{2}$ towards
$\sqrt{1-\tilde{\eta}^{2}}$, enlarges the coverage of the corresponding
ball on the unit sphere.  Thus, the proportional coverage of the
$\tilde{\eta}$-ball around $\Vert\vartheta\Vert_{2}^{-1}\sqrt{1-\tilde{\eta}^{2}}\vartheta$
(which has, as we showed above, the same proportional coverage of
the unit sphere as the $\eta$-ball around $\tilde{\vartheta}^{\ast}$
that we are actually trying to control) is bounded from below by the
proportional coverage of the $\tilde{\eta}$-ball around $\tilde{\vartheta}$,
which in turn is bounded from below by $N_{\tilde{\eta}}^{-1}$. Using
the fact that $N_{\tilde{\eta}}\leq(\frac{3}{\tilde{\eta}})^{\vert J\vert}$
combined with $\tilde{\eta}\geq\eta/\sqrt{2}$, we have 
\[
N_{\tilde{\eta}}^{-1}\geq\Big(\frac{3}{\tilde{\eta}}\Big)^{-\vert J\vert}\geq\Big(\frac{3\sqrt{2}}{\eta}\Big)^{-\vert J\vert}
\]
and therefore \ref{eq:mainwork} follows from \ref{eq:proplowerbound}
with 
\[
\mathcal{K}(\rho,\mu_{J})=-\log(\tilde{\mu}(\{\vartheta\in\R^{\vert J\vert}\mid\Vert\vartheta-\tilde{\vartheta}^{\ast}\Vert_{2}\leq\eta\}))\leq-\log(N_{\tilde{\eta}}^{-1})\leq\vert J\vert\log\Big(\frac{3\sqrt{2}}{\eta}\Big).
\]
It remains to show that changing the length of $\vartheta\neq0$ towards
$\sqrt{1-\tilde{\eta}^{2}}$ increases the proportional coverage of
the corresponding $\tilde{\eta}$-ball. By rotational symmetry, we
can assume $\vartheta=(\vartheta_{1},0,\dots,0)^{\top}\in\R^{\vert J\vert}$
with some $0<\vartheta_{1}\leq1$ at no loss of generality. Now, it
is sufficient to show that 
\[
\{y\in\R^{\vert J\vert}\mid(\Vert y-\vartheta\Vert_{2}\leq\tilde{\eta})\land(\Vert y\Vert_{2}=1)\}\subseteq\{y\in\R^{\vert J\vert}\mid(\Vert y-\tilde{\vartheta}\Vert_{2}\leq\tilde{\eta})\land(\Vert y\Vert_{2}=1)\}
\]
where $\tilde{\vartheta}=(\sqrt{1-\tilde{\eta}^{2}},0,\dots,0)^{\top}\in\R^{\vert J\vert}$.
In this setting, and as $\eta,\tilde{\eta}>0$, the relationship 
\begin{equation}
\tilde{\eta}^{2}=\eta^{2}\Vert\tilde{\vartheta}\Vert_{2}-2\Vert\tilde{\vartheta}\Vert_{2}+\Vert\tilde{\vartheta}\Vert_{2}^{2}+1\label{eq:tildeta}
\end{equation}
is equivalent to 
\begin{equation}
\eta^{2}=\frac{2\tilde{\vartheta}_{1}-\tilde{\vartheta}_{1}^{2}-1+\tilde{\eta}^{2}}{\tilde{\vartheta}_{1}}.\label{eq:eta}
\end{equation}
Using elementary calculus techniques together with the fact that
$\tilde{\eta}<1$, it is easy to see that 
\[
\frac{2\vartheta_{1}-\vartheta_{1}^{2}-1+\tilde{\eta}^{2}}{\vartheta_{1}}\leq2(1-\sqrt{1-\tilde{\eta}^{2}}).
\]
Combining this with the relationship between $\eta$ and $\tilde{\eta}$,
we obtain 
\begin{align*}
\{y\in\R^{\vert J\vert}\mid(\Vert y-\vartheta\Vert_{2}\leq\tilde{\eta})\land(\Vert y\Vert_{2}=1)\} & =\{y\in\R^{\vert J\vert}\mid(\Vert y-\tilde{\vartheta}^{\ast}\Vert_{2}^{2}\leq\frac{2\vartheta_{1}-\vartheta_{1}^{2}-1+\tilde{\eta}^{2}}{\vartheta_{1}})\land(\Vert y\Vert_{2}=1)\}\\
 & \subseteq\{y\in\R^{\vert J\vert}\mid(\Vert y-\tilde{\vartheta}^{\ast}\Vert_{2}^{2}\leq2(1-\sqrt{1-\tilde{\eta}^{2}}))\land(\Vert y\Vert_{2}=1)\}\\
 & =\{y\in\R^{\vert J\vert}\mid(\Vert y-\tilde{\vartheta}^{\ast}\Vert_{2}^{2}\leq\eta^{2})\land(\Vert y\Vert_{2}=1)\}\\
 & =\{y\in\R^{\vert J\vert}\mid(\Vert y-\tilde{\vartheta}\Vert_{2}\leq\tilde{\eta})\land(\Vert y\Vert_{2}=1)\}.\qedhere
\end{align*}
\end{proof}
\begin{proof}[Proof of \ref{lem:technicallemmaf}]
 To simplify the notation, we write $\tilde f^{\ast}=f_{d,M}^{\ast}$
and $\tilde{\beta}^{\ast}=\beta_{d,M}^{\ast}$. We will show that
\begin{equation}
\int\1_{\{\Vert f-\tilde f^{\ast}\Vert_{\psi}\leq\gamma\}}\d\nu_{d,M}(f)=\Big(\frac{C+1}{\gamma}\Big)^{-\vert\mathcal{Z}_{M}^{d}\vert}.\label{eq:mainwork2}
\end{equation}
The assertion follows directly with

\[
\mathcal{K}(\rho_{d,M,\gamma}^{2},\nu_{d,M})=-\log\Big(\int\1_{\{\Vert f-f^{\ast}\Vert_{\psi}\leq\gamma\}}\d\nu_{d,M}(f)\Big)=\vert\mathcal{Z}_{M}^{d}\vert\log\Big(\frac{C+1}{\gamma}\Big)\leq4^{d}N^{d}2^{dM+1}\log\Big(\frac{C+1}{\gamma}\Big).
\]
We now show \ref{eq:mainwork2} using the definition of $\nu_{d,M}$.
If we let $\lebesgue^{\mathcal{Z}_{M}^{d}}$ denote the Lebesgue measure
on $\mathcal{B}_{d,M}(C+1)$, we obtain 
\begin{align*}
\int\1_{\{\Vert f-\tilde f^{\ast}\Vert_{\psi}\leq\gamma\}}\d\nu_{d,M}(f) & =\int\1_{\{\Vert\Phi_{d,M}(\beta)-\tilde f^{\ast}\Vert_{\psi}\leq\gamma\}}\d\tilde{\nu}_{d,M}(\beta)\\
 & =\int\1_{\{\Vert\beta-\tilde{\beta}^{\ast}\Vert_{\mathcal{B}}\leq\gamma\}}\d\tilde{\nu}_{d,M}(\beta)\\
 & =\frac{\int\1_{\{\Vert\beta\Vert_{\mathcal{B}}\leq C+1\}}\1_{\{\Vert\beta-\tilde{\beta}^{\ast}\Vert_{\mathcal{B}}\leq\gamma\}}\d\lebesgue^{\mathcal{Z}_{M}^{d}}(\beta)}{\int\1_{\{\Vert\beta-\tilde{\beta}^{\ast}\Vert_{\mathcal{B}}\leq C+1\}}\d\lebesgue^{\mathcal{Z}_{M}^{d}}(\beta)}\\
 & =\frac{\int\1_{\{\Vert\beta-\tilde{\beta}^{\ast}\Vert_{\mathcal{B}}\leq\gamma\}}\d\lebesgue^{\mathcal{Z}_{M}^{d}}(\beta)}{\int\1_{\{\Vert\beta-\tilde{\beta}^{\ast}\Vert_{\mathcal{B}}\leq C+1\}}\d\lebesgue^{\mathcal{Z}_{M}^{d}}(\beta)}\\
 & =\Big(\frac{\gamma}{C+1}\Big)^{\vert\mathcal{Z}_{M}^{d}\vert}\frac{\int\1_{\{\Vert\beta-\tilde{\beta}^{\ast}\Vert_{\mathcal{B}}\leq1\}}\d\lebesgue^{\mathcal{Z}_{M}^{d}}(\beta)}{\int\1_{\{\Vert\beta-\tilde{\beta}^{\ast}\Vert_{\mathcal{B}}\leq1\}}\d\lebesgue^{\mathcal{Z}_{M}^{d}}(\beta)}\\
 & =\Big(\frac{C+1}{\gamma}\Big)^{-\vert\mathcal{Z}_{M}^{d}\vert},
\end{align*}
where we have used that 
\[
\Vert\beta\Vert_{\mathcal{B}}\leq\Vert\tilde{\beta}^{\ast}\Vert_{\mathcal{B}}+\Vert\beta-\tilde{\beta}^{\ast}\Vert_{\mathcal{B}}\leq C+\gamma\leq C+1
\]
on $\{\Vert\beta-\tilde{\beta}^{\ast}\Vert_{\mathcal{B}}\leq\gamma\}$
in the fourth equality. This implies that if the first indicator in
the integral in the numerator is $1$, so is the second.
\end{proof}
\bibliographystyle{apalike2}
\bibliography{sources}

\end{document}